\documentclass[11pt]{amsart}
\usepackage{amsfonts}
\usepackage{amsmath,amssymb}
\usepackage[foot]{amsaddr}
\usepackage{amsthm}
\usepackage{graphicx}
\usepackage{mathtools}
\usepackage{mathrsfs}
\usepackage{xcolor}
\usepackage{textcomp}
\usepackage{booktabs}
\usepackage{tikz}
\usepackage{enumitem}
\usepackage[colorlinks=true, linkcolor=blue, citecolor=blue, urlcolor=blue, psdextra]{hyperref}

\usetikzlibrary{arrows.meta,calc,decorations.pathreplacing}

\numberwithin{equation}{section}
\allowdisplaybreaks

\theoremstyle{plain}
\newtheorem{theorem}{Theorem}[section]
\newtheorem{proposition}[theorem]{Proposition}
\newtheorem{lemma}[theorem]{Lemma}

\theoremstyle{definition}

\newtheorem{example}{Example}[section]

\theoremstyle{remark}
\newtheorem{remark}[theorem]{Remark}

\begin{document}

\bibliographystyle{abbrv}

\renewcommand{\baselinestretch}{1.05}

\title[Large deviations for linear regions]
{Local large deviations for linear-region growth in random piecewise-linear networks}

\author{Recep {\"O}zkan}
\address{Middle East Technical University, Ankara, Turkey}
\email{rozkan@metu.edu.tr}

\author{Christian Hirsch}
\address{Department of Mathematics, Aarhus University, Aarhus, Denmark}
\email{hirsch@math.au.dk}

\subjclass[2020]{Primary 60F10; Secondary 68T07, 37H12, 37E05}

\keywords{Large deviations, affine regions, linear regions, piecewise-linear neural
networks, random compositions, tent maps, submultiplicative pressure}

\begin{abstract}
We study a random compositional model for the growth of affine regions in deep
piecewise-linear networks. The model is generated by i.i.d.\ perturbations of the
symmetric height-one tent map, and the main observable is the number \(N_n\) of
affine pieces after \(n\) layers. We prove the existence of a submultiplicative
pressure for \(N_n\), yielding exponential upper bounds for both tails of
\(n^{-1}\log N_n\). The same argument applies to abstract submultiplicative
complexity observables and gives higher-dimensional extensions for
convex-polytopal affine-cover counts and worst-line affine-piece counts. Since
the true branch count has no matching supermultiplicative inequality, lower
bounds require a separate certified construction. We introduce a finite-state
defect process that records branches whose future splitting can be guaranteed,
and use bridge words to obtain constructive upper-tail lower bounds. In a
uniformly favorable small-noise regime, this process is governed by a companion
matrix whose Perron root tends to \(2\), implying eventual exclusion of lower
tails below \(\log 2-\xi\).
\end{abstract}

\maketitle

\section{Introduction}\label{sec:intro}

The geometry of linear regions is a standard proxy for complexity in deep piecewise-linear networks. For ReLU and related activations, the input space is partitioned into polyhedral cells on each of which the network computes an affine map, and a substantial literature studies how the number and arrangement of these cells depend on architecture and depth \cite{BianchiniScarselli2014,PascanuMontufarBengio2014,MontufarPascanuChoBengio2014,RaghuPooleKleinbergGanguliSohlDickstein2017,SerraTjandraatmadjaRamalingam2018}. At one end of this literature lie explicit depth-separation constructions, most notably Telgarsky's sawtooth construction, where repeated composition of a one-dimensional folding map creates exponentially many oscillations or regions \cite{Telgarsky2015,Telgarsky2016}. At the other end lie results showing that, for standard random initialization, this worst-case mechanism is highly nongeneric \cite{NovakBahriAbolafiaPenningtonSohlDickstein2018,HaninRolnick2019}.
Figure~\ref{fig:relu-regions-2d} illustrates the type of geometric complexity
measured by linear-region counts in a simple two-dimensional ReLU network. The
present paper studies a one-dimensional random compositional model in which the
same region-counting problem can be analyzed through submultiplicative pressure
and certified lower processes.

\begin{figure}[!htpb]
\centering
\includegraphics[width=.85\textwidth]{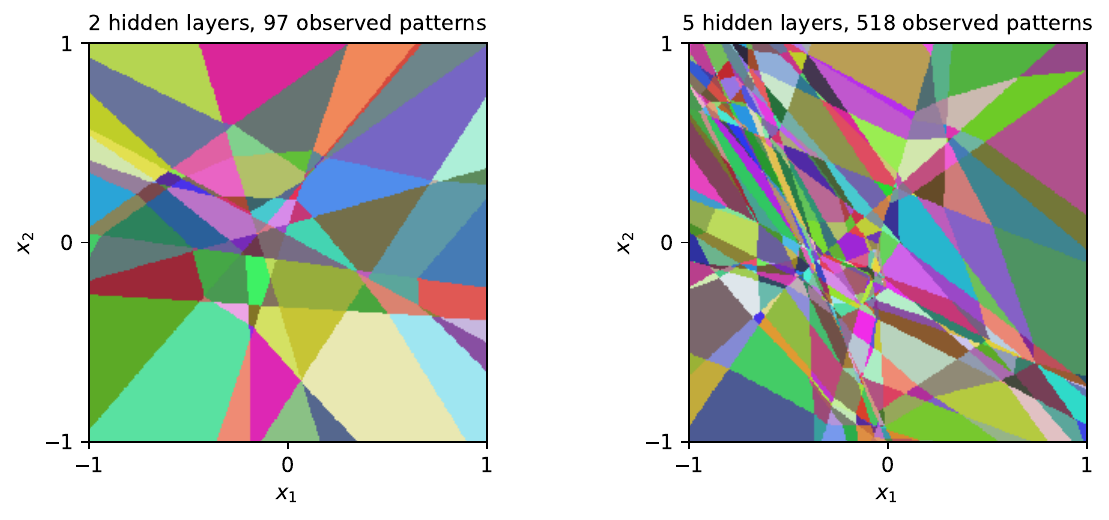}
\caption{Linear regions of randomly initialized ReLU networks on the input
square \([-1,1]^2\). Colors indicate different activation patterns, and hence
different affine regions of the realized piecewise-affine map. The left panel
uses two hidden layers, while the right panel uses five hidden layers. The
figure is illustrative only; the results of this paper concern the exact
one-dimensional branch-count process generated by random tent-map
compositions.}
\label{fig:relu-regions-2d}
\end{figure}

The purpose of the present paper is to study a model that sits between these two viewpoints. We keep the compositional folding mechanism of a one-dimensional depth-separation construction, but we randomize the layer parameters. The resulting process is no longer deterministic, and the natural observable becomes the exponential growth rate of the number of affine pieces. Our goal is to understand this growth from a large-deviation point of view.

Large-deviation principles (LDPs) for random growth observables often arise from subadditive, superadditive, or approximately additive structures \cite{SeppalainenYukich2001}. The present model exhibits a related but asymmetric structure. The branch-count admits a submultiplicative inequality, which yields pressure bounds and exponential large-deviation upper estimates. However, no corresponding supermultiplicative inequality is available for the true branch count. This asymmetry is the reason that the lower-bound theory below requires additional geometric structure.
There are substantial LDPs for Galton--Watson processes,
branching processes in random environment, and branching random walks; see, for
example, \cite{BigginsBingham1993,NeyVidyashankar2004,
FleischmannWachtel2007,BansayeBerestycki2009}. This theory, however, does not
apply directly to the affine-region count studied here. The affine pieces are
not particles with conditionally independent reproduction. They are cells of a
random piecewise-affine partition, and their future splitting depends on their
geometric position and on the common future layers shared by all cells. Thus,
the term ``branch'' is used below in a geometric sense, not in the probabilistic
sense of a Galton--Watson or branching-random-walk genealogy.

Recent work has developed LDPs for a broad range of neural-network observables and asymptotic regimes, including outputs and covariance processes of Gaussian deep networks, deep networks with ReLU or other linearly growing activations, functional output processes in wide or deep limits, and empirical weight dynamics under stochastic-gradient training of shallow networks; see, for instance, \cite{MacciPacchiarottiTorrisi2026,AndreisBassettiHirsch2026,Vogel2026,DiLilloMacciPacchiarotti2026,MacciPacchiarottiPapagiannouliTorrisiTrevisan2026,HirschWillhalm2025}. The observable studied here is of a different nature: we focus on the geometric growth of affine pieces under random composition.

The model is simple. Each layer is a random tent map \(T_{\omega_n}\), and the depth-\(n\) composition is \(F_n=T_{\omega_n}\circ\cdots\circ T_{\omega_1}\). The quantity of interest is \(N_n\), the number of affine pieces of \(F_n\) on \([0,1]\). Thus, \(N_n\) is a one-dimensional linear-region count for a random deep piecewise-linear network. Since each tent map has an explicit width-three ReLU representation, this model is directly embedded in the standard ReLU formalism. The deterministic choice \(\omega_n\equiv (2,2)\) recovers the symmetric height-one tent map underlying Telgarsky's construction and hence the classical exponential doubling mechanism.

The map \(T_{2,2}\) is the deterministic sawtooth block underlying the
1D depth-separation constructions of Telgarsky
\cite{Telgarsky2015,Telgarsky2016}; see also
\cite[Sec.~6]{PetersenZech2024} for a textbook treatment of the same doubling
mechanism in terms of piecewise-linear regions. In this deterministic case,
each composition doubles the affine-piece count, so \(N_n=2^n\). The
contribution of the present paper is not a new deterministic doubling
construction, but the large-deviation analysis of \(N_n\) when the layers are
random perturbations of this block.

The random perturbation of this construction is natural both theoretically and practically. In any realistic network, layer parameters are not tuned to the exact Telgarsky configuration \(\omega_n\equiv (2,2)\); rather, they deviate due to initialization, finite-precision arithmetic, or training dynamics. The probabilistic question therefore arises as: how stable is exponential linear-region growth under such deviations? Hanin and Rolnick \cite{HaninRolnick2019} showed that, along any fixed one-dimensional input trajectory, the average number of linear regions grows only linearly with the total number of neurons, far below the exponential worst-case bound. The present model occupies the intermediate regime: the layer parameters are random, but concentrated near \(T_{2,2}\) rather than drawn from a standard initialization. This is precisely the regime where large-deviation questions become meaningful. One can rigorously quantify how likely it is that near-maximal expressivity is achieved or lost under small perturbations of the Telgarsky configuration.

Figure~\ref{fig:perturbed-telgarsky} illustrates the corresponding
one-dimensional mechanism. Small perturbations of the Telgarsky block may
preserve the full affine-piece count at moderate depth, whereas more lossy
perturbations can cause some branches to miss later peaks and eliminate
potential teeth.

\begin{figure}[!htbp]
\centering
\includegraphics[width=.92\textwidth]{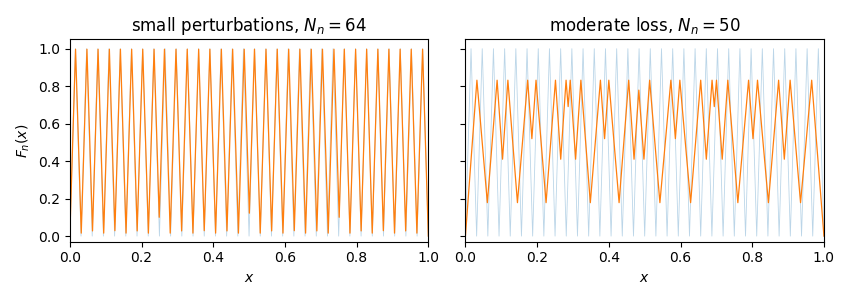}
\caption{Perturbed Telgarsky compositions at moderate depth. In each panel, the
pale curve shows the deterministic reference $n$-fold composition \(T_{2,2} \circ \cdots \circ T_{2,2}\), while the
darker curve shows one realization of the random composition \(F_n\). Small
perturbations may preserve the full affine-piece count, whereas moderate lossy
perturbations can cause some branches to miss later peaks and eliminate
potential teeth. The figure is illustrative only; the results below concern the
exact branch-count process \(N_n\), not numerical sampling of the graph.}
\label{fig:perturbed-telgarsky}
\end{figure}

The main results are stated in Section~\ref{sec:model-results}. For the upper bound, a pathwise submultiplicative inequality for \(N_n\) yields a bilateral pressure \(\Lambda(q)=\lim_{n\to\infty} n^{-1}\log \mathbb E[N_n^q]\), \(q\in\mathbb R\), and exponential upper bounds for both tails of \(n^{-1}\log N_n\). Because this argument uses only submultiplicativity and independent environment blocks, we also record an abstract pressure principle and two higher-dimensional consequences, one for convex-polytopal affine-cover counts and one for worst-line affine-piece counts.

The lower-bound theory is local and constructive. Its purpose is to compensate for the absence of a supermultiplicative inequality for the true branch count. Near the deterministic Telgarsky regime, branches may lose future splitting potential when their image intervals approach the endpoints of \([0,1]\). We therefore introduce a coarse defect dynamics that tracks only enough endpoint information to certify future splits. This gives a favorable finite-state subsystem and a bridge word that restores quasi-multiplicativity for a certified lower count. At fixed coarse parameters \((r,m)\), the bridge construction gives upper-tail lower bounds for the true observable \(N_n\) along bridge-compatible subsequences. In a uniformly favorable small-noise regime, the lower subsystem becomes deterministic and is governed by a companion matrix \(A_m\), whose Perron root \(\rho_m\) satisfies \(\rho_m\uparrow 2\). Consequently, for every \(\xi>0\), if the support of the noise is sufficiently small, then the lower tail below \(\log 2-\xi\) is eventually impossible.

The bridge construction is intentionally one-sided. It gives a sufficient mechanism for high upper-tail growth, not a sharp characterization of all high-growth trajectories. The certified lower count may be much smaller than the true number of affine pieces. The remaining obstruction to a full sharp large-deviation principle is therefore an asymptotic-exactness theorem comparing the true branch count to the certified coarse transfer process on the relevant rare-event scale.

The paper is organized as follows. Section~\ref{sec:model-results} introduces the model and states the main theorems, first the pressure upper bounds and then the certified lower-bound mechanisms.  Section~\ref{sec:pressure} proves the pressure theorem and the resulting exponential upper bounds. Section~\ref{sec:favorable} constructs the favorable subsystem, defines the coarse transfer matrices, and proves the bridge lemma. Section~\ref{sec:bridged-lower} proves the bridged lower bound for the upper tail of the original branch-count observable. Section~\ref{sec:uniform} analyzes the uniformly favorable regime, proves the Perron-root asymptotics, and proves the local theorem near \(\log 2\).

\section{Model and main results}\label{sec:model-results}

This section has three parts. Section~\ref{subsec:model} defines the random tent model and the branch-count observable. Section~\ref{subsec:ldp-upper} states the large-deviation upper bounds, which are global consequences of submultiplicativity. Section~\ref{subsec:ldp-lower} states the large-deviation lower bounds, which require a certified lower process based on coarse endpoint defects.

\subsection{Model}\label{subsec:model}

Fix \(0<\varepsilon<1\) and \(h(a,b):=ab/(a+b)\). Let \(\mu_\varepsilon\) be a probability measure supported on
\[
U_\varepsilon
:=
\bigl\{(a,b)\in(0,\infty)^2:
|a-2|\le\varepsilon,\ |b-2|\le\varepsilon,\ h(a,b)\le1
\bigr\},
\]
Let \((\omega_n)_{n\ge1}\) be i.i.d. with law \(\mu_\varepsilon\), and write \(\omega_n=(A_n,B_n)\). For \(\omega=(a,b)\in U_\varepsilon\), define
\[
	T_\omega(x):=(\min(ax,b(1-x)))_+,
\]
The map \(T_\omega\) is continuous and piecewise affine, with peak location \(c(\omega):=b/(a+b)\) and peak height \(h(\omega)=ab/(a+b)\le1\). Thus, \(T_\omega([0,1])\subseteq[0,1]\); see Figure \ref{fig:defects-splitting} for an illustration.
% Preamble:
% \usepackage{tikz}
% \usetikzlibrary{arrows.meta,calc,decorations.pathreplacing}

\begin{figure}[!htbp]
\centering
\begin{tikzpicture}[
    x=1cm,
    y=1cm,
    >=Latex,
    line cap=round,
    line join=round,
    axis/.style={->, thick},
    tent/.style={very thick, black},
    guide/.style={densely dashed, gray!70},
    interval/.style={very thick, green!50!black},
    leftdef/.style={thick, blue!70!black, <->},
    rightdef/.style={thick, red!70!black, <->},
    annot/.style={align=center, font=\small}
]

% Geometry
\def\xmax{10}
\def\peakx{5.6}
\def\peaky{4.3}
\def\ux{2.8}
\def\vx{8.2}

% Crossing window
\fill[violet!10] (\ux,0) rectangle (\vx,\peaky);

% Axes
\draw[axis] (-0.25,0) -- (\xmax+0.35,0)
    node[below right] {$x$};
\draw[axis] (0,-0.25) -- (0,\peaky+0.55)
    node[above left] {$T_\omega(x)$};

% Tent map
\draw[tent] (0,0) -- (\peakx,\peaky) -- (\xmax,0);

% Peak guides
\draw[guide] (\peakx,0) -- (\peakx,\peaky);
\draw[guide] (0,\peaky) -- (\peakx,\peaky);

% Peak
\fill (\peakx,\peaky) circle (2.2pt);
\node[above=3pt] at (\peakx,\peaky)
    {$\bigl(c(\omega),h(\omega)\bigr)$};

% Axis labels
\node[below left] at (0,0) {$0$};
\node[below] at (\xmax,0) {$1$};
\node[left] at (0,\peaky) {$h(\omega)$};

% x-axis markers
\draw[guide] (\ux,0) -- (\ux,{0.32*\peaky});
\draw[guide] (\vx,0) -- (\vx,{0.32*\peaky});

\node[below=3pt] at (\ux,0) {$u$};
\node[below=3pt] at (\peakx,0) {$c(\omega)$};
\node[below=3pt] at (\vx,0) {$v$};

% Branch image interval
\draw[interval] (\ux,0) -- (\vx,0);
\node[below=18pt, green!45!black] at ({(\ux+\vx)/2},0)
    {$J=[u,v]$};

% Endpoint defects
\draw[leftdef] (0,-0.62) -- (\ux,-0.62);
\node[below=3pt, blue!70!black] at ({\ux/2},-0.62)
    {$L(J)=u$};

\draw[rightdef] (\vx,-0.62) -- (\xmax,-0.62);
\node[below=3pt, red!70!black] at ({(\vx+\xmax)/2},-0.62)
    {$R(J)=1-v$};

% Slope labels
\node[annot] at (2.15,2.55) {$\text{slope }a$};
\node[annot] at (8.0,2.55) {$\text{slope }-b$};

% Crossing annotation
\node[annot, text=violet!70!black] at ({(\ux+\vx)/2},2.15)
    {$u<c(\omega)<v$\\[2pt]
     $J$ crosses the peak};

\node[annot, text=violet!70!black] at ({(\ux+\vx)/2},1.25)
    {the branch is guaranteed\\
     to split at the next layer};

\end{tikzpicture}
\caption{Endpoint defects and guaranteed splitting. A branch with image \(J=[u,v]\) has left defect \(L(J)=u\) and right defect \(R(J)=1-v\). If \(u<c(\omega)<v\), then \(J\) crosses the peak of the next tent map \(T_\omega\), so the branch splits into two child branches.}
\label{fig:defects-splitting}
\end{figure}

The observable studied in this paper is the number \(N_n\) of affine pieces of \(F_n\) on \([0,1]\), where \(F_n:=T_{\omega_n}\circ\cdots\circ T_{\omega_1}\). Equivalently, if \(A(f;J)\) denotes the smallest number of subintervals partitioning \(J\) on each of which \(f\) is affine, then
\[
N_n:=A(F_n;[0,1]).
\]
Since each layer has at most two affine pieces on \([0,1]\), one always has \(1\le N_n\le2^n\). In the deterministic Telgarsky case \(\omega_n\equiv(2,2)\), one has \(N_n=2^n\).
The model is a scalar width-three ReLU network. Indeed, for \(c=b/(a+b)\),
\[
T_\omega(x)
=
a\,\operatorname{ReLU}(x)
-(a+b)\operatorname{ReLU}(x-c)
+b\,\operatorname{ReLU}(x-1).
\]
Thus, the random tent model is directly embedded in the standard ReLU formalism.

\subsection{Large deviation upper bounds}\label{subsec:ldp-upper}

We first state the upper-bound results. These are global and use only submultiplicativity of the true branch count. 

\subsubsection{Pressure bounds for the true branch count}

The first result concerns only the true branch-count observable \(N_n\). Let \(X_n:=n^{-1}\log N_n\).

\begin{theorem}[Pressure and exponential upper bounds]\label{thm:main-pressure}
For every \(q\in\mathbb R\), the limit \[\Lambda(q):=\lim_{n\to\infty} n^{-1}\log\mathbb E[N_n^q]\]
exists. For \(q\ge0\), one has \(\Lambda(q)=\inf_{n\ge1} n^{-1}\log\mathbb E[N_n^q]\), while for \(q<0\), one has \(\Lambda(q)=\sup_{n\ge1} n^{-1}\log\mathbb E[N_n^q]\). 

	Moreover, \(\Lambda\) is finite and convex on \(\mathbb R\), and \(\min\{0,q\log 2\}\le \Lambda(q)\le \max\{0,q\log 2\}\). If \(I_+(x):=\sup_{q\ge0}\{qx-\Lambda(q)\}\) and \(I_-(x):=\sup_{q\le0}\{qx-\Lambda(q)\}\), then, for every \(x\in\mathbb R\),
\[
\limsup_{n\to\infty} n^{-1}\log\mathbb P(X_n\ge x)\le -I_+(x)
\qquad\text{and}\qquad
\limsup_{n\to\infty} n^{-1}\log\mathbb P(X_n\le x)\le -I_-(x).
\]
\end{theorem}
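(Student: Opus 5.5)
The plan is to prove a pathwise submultiplicative inequality and then run Fekete's lemma on the moment sequence, using independence of disjoint environment blocks.

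\emph{Step 1 (submultiplicativity).} For \(m,n\ge1\), write \(F_{m+n}=G\circ F_m\), where \(G:=T_{\omega_{m+n}}\circ\cdots\circ T_{\omega_{m+1}}\). On each of the \(N_m\) affine pieces \(J\) of \(F_m\), the map \(F_m\) is affine and monotone (possibly constant), so \(F_m(J)\) is an interval. The number of affine pieces of \(G\circ F_m\) on \(J\) is then at most \(A(G;[0,1])\). Indeed, if \(F_m\) is non-constant on \(J\), the preimages under \(F_m|_J\) of the affine pieces of \(G\) partition \(J\). If \(F_m\) is constant on \(J\), there is a single piece. Summing over the pieces gives
\[
N_{m+n}\le N_m\cdot N'_n,\qquad N'_n:=A(G;[0,1]),
\]
where \(N'_n\) is a function of \((\omega_{m+1},\dots,\omega_{m+n})\) alone. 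It is therefore independent of \(N_m\) and has the law of \(N_n\).

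\emph{Step 2 (pressure for \(q\ge0\)).} For \(q\ge0\), raising the inequality to the power \(q\) and using independence gives \(\mathbb E[N_{m+n}^q]\le \mathbb E[N_m^q]\,\mathbb E[N_n^q]\). Since \(1\le N_n\le 2^n\), the quantity \(a_n:=\log\mathbb E[N_n^q]\) is finite and lies in \([0,qn\log 2]\). Fekete's lemma then gives \(\Lambda(q)=\lim a_n/n=\inf a_n/n\), with \(0\le\Lambda(q)\le q\log2\).

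\emph{Step 3 (pressure for \(q<0\)).} This is the step I expect to be the only subtle one. For \(q<0\), the map \(t\mapsto t^q\) is decreasing, so the inequality reverses: \(N_{m+n}^q\ge N_m^q (N'_n)^q\). Independence then gives \(\mathbb E[N_{m+n}^q]\ge\mathbb E[N_m^q]\mathbb E[N_n^q]\), so \(a_n\) is superadditive. It is also bounded: \(a_n\in[qn\log2,0]\). Fekete applied to \(-a_n\) yields \(\Lambda(q)=\sup a_n/n\in[q\log 2,0]\). Combining the two signs gives the stated bounds \(\min\{0,q\log2\}\le\Lambda(q)\le\max\{0,q\log2\}\), and in particular finiteness.

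\emph{Step 4 (convexity and tails).} Convexity follows because each \(q\mapsto n^{-1}\log\mathbb E[N_n^q]=n^{-1}\log\mathbb E[e^{qnX_n}]\) is convex by Hölder's inequality, and pointwise limits of convex functions are convex. The tail bounds come from Chebyshev's inequality. For \(q\ge0\),
\[
\mathbb P(X_n\ge x)\le e^{-nqx}\,\mathbb E[N_n^q],
\]
so \(\limsup n^{-1}\log\mathbb P(X_n\ge x)\le -(qx-\Lambda(q))\). Optimizing over \(q\ge0\) gives \(-I_+(x)\), since the bound holds for each \(q\) separately. Similarly, for \(q\le0\), \(\mathbb P(X_n\le x)\le e^{-nqx}\mathbb E[N_n^q]\), and taking the supremum over \(q\le0\) gives \(-I_-(x)\). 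The case \(q=0\) is trivial, with \(\Lambda(0)=0\).
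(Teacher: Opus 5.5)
Your proposal is correct and follows essentially the same route as the paper. Both arguments use the same ingredients: the pathwise inequality \(N_{n+\ell}\le N_nN_\ell'\) with an independent shifted-block copy, subadditive or superadditive Fekete according to the sign of \(q\), the deterministic bounds \(1\le N_n\le 2^n\), convexity as a pointwise limit of convex log-moment functions, and exponential Markov bounds optimized over \(q\) of the appropriate sign. The only difference is organizational. The paper first packages these steps as an abstract submultiplicative pressure principle (Proposition~\ref{prop:abstract-pressure}) and applies it with \(C_n=N_n\), \(B=2\), whereas you argue directly and treat branches on which \(F_m\) is constant a bit more explicitly.
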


The proof uses only the pathwise submultiplicativity, Fekete's lemma, and exponential Markov bounds. Thus, no coarse graining, favorable subsystem, or choice of \(m\) is involved in Theorem~\ref{thm:main-pressure}. The result gives exponential upper bounds for both tails of the true observable \(N_n\), but it does not give matching lower bounds.
The argument is more robust than the one-dimensional tent model suggests. Section~\ref{sec:pressure} first isolates an abstract pressure principle for submultiplicative complexity observables. Theorem~\ref{thm:main-pressure} is then obtained as its first application. Two further consequences are recorded next.

\subsubsection{Robust extensions of the pressure argument}

The first extension concerns higher-dimensional compositional models. Let \(K\subseteq\mathbb R^d\) be a compact convex polytope. We use the convention that a map is affine on a subset of \(K\) if it agrees there with the restriction of an affine map on \(\mathbb R^d\). For a map \(f:K\to K\), let \(R_{\mathrm{cov}}(f;K)\) denote the smallest integer \(M\ge1\) for which there exist compact convex polytopes \(P_1,\dots,P_M\subseteq K\) whose union is \(K\) and such that \(f\) is affine on each \(P_i\). This is a convex-polytopal affine-cover count. It is the higher-dimensional analogue of the one-dimensional affine-piece count that is most directly compatible with composition. In the next two results, piecewise affine means that such a finite convex-polytopal affine cover exists.

The uniform bound below is an architecture-level assumption: in standard
piecewise-affine classes, such as ReLU networks with fixed widths and depth,
the constant \(B\) can be bounded explicitly in terms of the architecture.

\begin{theorem}[Higher-dimensional affine-cover pressure]\label{thm:main-polyhedral-pressure}
Let \(K\subseteq\mathbb R^d\) be a compact convex polytope, and let \((\Phi_{\omega_n})_{n\ge1}\) be i.i.d. random piecewise affine self-maps of \(K\). Set \(\Phi_n:=\Phi_{\omega_n}\circ\cdots\circ\Phi_{\omega_1}\) and \(R_n:=R_{\mathrm{cov}}(\Phi_n;K)\). Assume that \(R_n\) is measurable for every \(n\ge1\), and that 
\[\sup_{\omega}R_{\mathrm{cov}}(\Phi_{\omega};K)\le B\] 
for some deterministic \(B\ge1\). Then the conclusions of Theorem~\ref{thm:main-pressure} remain valid after replacing \(N_n\), \(X_n\), \(\Lambda\), \(I_\pm\), and \(\log 2\) by \(R_n\), \(Y_n:=n^{-1}\log R_n\), \(\Lambda_{\mathrm{cov}}\), \(I_{\mathrm{cov},\pm}\), and \(\log B\), respectively, where
\[
\Lambda_{\mathrm{cov}}(q):=\lim_{n\to\infty} n^{-1}\log\mathbb E[R_n^q]
\]
and \(I_{\mathrm{cov},\pm}\) are defined from \(\Lambda_{\mathrm{cov}}\) exactly as in Theorem~\ref{thm:main-pressure}.
\end{theorem}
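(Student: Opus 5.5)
The plan is to reduce Theorem~\ref{thm:main-polyhedral-pressure} to the same abstract pressure principle that yields Theorem~\ref{thm:main-pressure}. It needs three ingredients for the sequence \(R_n\):
\begin{itemize}
\item a deterministic bound \(1\le R_n\le B^n\);
\item a pathwise submultiplicative inequality \(R_{n+m}\le R_n\cdot R^{(n)}_m\), where \(R^{(n)}_m:=R_{\mathrm{cov}}(\Phi_{\omega_{n+m}}\circ\cdots\circ\Phi_{\omega_{n+1}};K)\) is built from the shifted environment;
\item independence of \(R_n\) and \(R^{(n)}_m\), with \(R^{(n)}_m\overset{d}{=}R_m\).
\end{itemize}
The third is immediate from the i.i.d.\ assumption and the assumed measurability. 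Given these, \(a_n:=\log\mathbb E[R_n^q]\) is finite, since \(R_n^q\in[\min(1,B^{nq}),\max(1,B^{nq})]\).

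\textbf{Step 1: pathwise submultiplicativity.} Let \(P_1,\dots,P_M\) be an optimal affine cover of \(K\) for \(\Phi_n\), so \(\Phi_n|_{P_i}=L_i|_{P_i}\) with \(L_i\) affine on \(\mathbb R^d\). Let \(Q_1,\dots,Q_{M'}\) be an optimal cover for the shifted composition \(G\), so \(G|_{Q_j}=L'_j|_{Q_j}\). Set \(P_{ij}:=P_i\cap L_i^{-1}(Q_j)\). The preimage of a convex polytope under an affine map is a closed convex polyhedron, and its intersection with the polytope \(P_i\) is again a compact convex polytope, possibly empty, in which case it is discarded. Since \(\Phi_n(P_i)\subseteq K=\bigcup_j Q_j\), the sets \(P_{ij}\) cover \(P_i\), and on \(P_{ij}\) the composition \(G\circ\Phi_n\) agrees with the affine map \(L'_j\circ L_i\). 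Hence \(R_{n+m}\le MM'\). Applying the same argument one layer at a time gives \(R_n\le B^n\), and \(R_n\ge1\) holds trivially.

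\textbf{Step 2: Fekete.} For \(q\ge0\), submultiplicativity and independence give \(\mathbb E[R_{n+m}^q]\le\mathbb E[R_n^q]\,\mathbb E[R_m^q]\), so \(a_n\) is subadditive. For \(q<0\) the inequality \(R_{n+m}^q\ge R_n^q(R_m^{(n)})^q\) reverses, so \(a_n\) is superadditive. Fekete's lemma then gives existence of the limit together with the inf/sup formulas. The bounds \(\min\{0,q\log B\}\le\Lambda_{\mathrm{cov}}(q)\le\max\{0,q\log B\}\) follow from \(1\le R_n\le B^n\). Convexity follows because each \(n^{-1}a_n\) is convex by Hölder's inequality, and a pointwise limit of convex functions is convex.

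\textbf{Step 3: tail bounds.} For \(q\ge0\), Markov's inequality gives \(\mathbb P(Y_n\ge x)\le e^{-nqx}\mathbb E[R_n^q]\). Taking \(\limsup n^{-1}\log\) and then optimizing over \(q\ge0\) yields the bound with \(-I_{\mathrm{cov},+}(x)\). For \(q\le0\), the same argument applied to \(\{Y_n\le x\}=\{R_n^q\ge e^{nqx}\}\) gives the lower-tail bound.

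The only genuinely non-routine point is the geometric one in Step 1. One must check that the pulled-back pieces really are compact convex polytopes covering \(P_i\). This relies on using covers rather than partitions, together with the convention that "affine on a set" means restriction of a global affine map, so that \(L_i^{-1}(Q_j)\) is well defined even when \(L_i\) is degenerate.
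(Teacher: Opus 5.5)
Your proposal is correct and follows essentially the same route as the paper. The paper likewise proves submultiplicativity of affine-cover counts via the pulled-back cover \(P_i\cap L_i^{-1}(Q_j)\), gets \(R_n\le B^n\) by iterating it, uses independence of the shifted blocks, and then applies the same Fekete/convexity/exponential-Markov argument, which it packages as a separate abstract pressure principle where you carry it out inline.
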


\begin{example}[A computable affine-cover bound]\label{ex:relu-cover-bound}
Let \(K\subseteq\mathbb R^d\) be a compact convex polytope. Fix a ReLU
architecture with \(L\) hidden layers of widths \(m_1,\dots,m_L\), input
dimension \(d\), and output dimension \(d\). Suppose that each random map
\(\Phi_\omega:K\to K\) is the restriction to \(K\) of a network with this
architecture. Then the assumption in Theorem~\ref{thm:main-polyhedral-pressure}
holds with
\[
B
=
\prod_{\ell=1}^L H_d(m_\ell),
\qquad
H_d(m):=\sum_{j=0}^d {m\choose j}.
\]
Indeed, on each affine cell created before layer \(\ell\), the \(m_\ell\)
preactivation functions of layer \(\ell\) are affine functions of the original
input. Their zero sets cut the cell into at most \(H_d(m_\ell)\) convex
polytopes. Iterating this over the layers gives
\(
R_{\mathrm{cov}}\big(\Phi_\omega;K\big)
\le
\prod_{\ell=1}^L H_d(m_\ell)
\)
uniformly in the weights and biases. Thus, the constant \(B\) is deterministic
and computable from the architecture alone. In the one-dimensional case,
\(H_1(m)=m+1\); in particular, a single one-kink layer gives \(B=2\), matching
the two-piece tent-map case.
\end{example}

The second extension concerns one-dimensional slices through higher-dimensional compositions. Let \(\mathcal S(K)\) denote the family of affine maps \(\gamma:[0,1]\to K\), including degenerate affine segments. For a map \(f:K\to K\), define its worst-line affine-piece count by \(L(f;K):=\sup_{\gamma\in\mathcal S(K)}A(f\circ\gamma;[0,1])\).

\begin{theorem}[Worst-line pressure in higher dimension]\label{thm:main-line-pressure}
Let \(K\subseteq\mathbb R^d\) be a compact convex polytope, and let \((\Phi_{\omega_n})_{n\ge1}\) be i.i.d. random piecewise affine self-maps of \(K\). Set \(\Phi_n:=\Phi_{\omega_n}\circ\cdots\circ\Phi_{\omega_1}\) and \(L_n:=L(\Phi_n;K)\). Assume that \(L_n\) is measurable for every \(n\ge1\), and that 
\[\sup_{\omega}L(\Phi_{\omega};K)\le B\] 
for some deterministic \(B\ge1\). Then the conclusions of Theorem~\ref{thm:main-pressure} remain valid after replacing \(N_n\), \(X_n\), \(\Lambda\), \(I_\pm\), and \(\log 2\) by \(L_n\), \(Z_n:=n^{-1}\log L_n\), \(\Lambda_{\mathrm{line}}\), \(I_{\mathrm{line},\pm}\), and \(\log B\), respectively, where
\[
\Lambda_{\mathrm{line}}(q):=\lim_{n\to\infty} n^{-1}\log\mathbb E[L_n^q]
\]
and \(I_{\mathrm{line},\pm}\) are defined from \(\Lambda_{\mathrm{line}}\) exactly as in Theorem~\ref{thm:main-pressure}.
\end{theorem}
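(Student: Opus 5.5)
The plan is to apply the abstract pressure principle behind Theorem~\ref{thm:main-pressure}. It needs four ingredients: the observables \(L_n\) are measurable, they satisfy a uniform bound \(1\le L_n\le B^n\), they are pathwise submultiplicative in the sense \(L_{n+k}\le L_n\cdot L^{(n)}_k\), and the shifted observable \(L^{(n)}_k:=L(\Phi_{\omega_{n+k}}\circ\cdots\circ\Phi_{\omega_{n+1}};K)\) is independent of \(L_n\) and has the same law as \(L_k\). Given these, I would argue exactly as for \(N_n\). For \(q\ge0\), taking \(q\)-th powers and expectations and using independence gives
\[
\mathbb E[L_{n+k}^q]\le\mathbb E[L_n^q]\,\mathbb E[L_k^q].
\]
For \(q<0\), the inequality reverses: \(L_{n+k}^q\ge L_n^q(L^{(n)}_k)^q\), so \(\mathbb E[L_n^q]\) is supermultiplicative. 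Fekete's lemma then gives existence of \(\Lambda_{\mathrm{line}}(q)\) as an \(\inf\) for \(q\ge0\) and as a \(\sup\) for \(q<0\). The bounds \(1\le L_n\le B^n\) give \(\min\{0,q\log B\}\le\Lambda_{\mathrm{line}}(q)\le\max\{0,q\log B\}\), hence finiteness. Convexity follows from H\"older's inequality applied to \(n^{-1}\log\mathbb E[L_n^q]\), passing to the limit. The tail bounds are the exponential Chebyshev estimates
\[
\mathbb P(Z_n\ge x)\le e^{-nqx}\mathbb E[L_n^q]\quad (q\ge0),
\]
and analogously for \(q\le0\) on the lower tail, optimized over \(q\).

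Measurability is assumed, so the remaining work is the two deterministic properties. For the lower bound, \(L_n\ge1\) holds because a degenerate segment \(\gamma\) gives \(A(\Phi_n\circ\gamma;[0,1])=1\); any \(\gamma\) gives at least one piece.

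The key step, and the only genuinely new point compared with the one-dimensional case, is the submultiplicativity \(L(g\circ f;K)\le L(f;K)\,L(g;K)\) for piecewise affine self-maps \(f,g\) of \(K\). Fix \(\gamma\in\mathcal S(K)\). Partition \([0,1]\) into \(A(f\circ\gamma;[0,1])\le L(f;K)\) subintervals \(J_i\) on each of which \(f\circ\gamma\) is affine. On a single \(J_i=[s_i,t_i]\), reparametrize affinely to \([0,1]\). Then \(f\circ\gamma|_{J_i}\) becomes an affine map \(\gamma_i:[0,1]\to K\), possibly degenerate; here we use \(f(K)\subseteq K\) and the fact that the image of an affine function on an interval is a segment in \(K\). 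Hence \(\gamma_i\in\mathcal S(K)\), and \(g\circ\gamma_i\) has at most \(L(g;K)\) affine pieces. Pulling these back to \(J_i\) and concatenating over \(i\) gives a partition of \([0,1]\) into at most \(L(f;K)L(g;K)\) intervals on which \(g\circ f\circ\gamma\) is affine. Taking the supremum over \(\gamma\) proves the claim. Applying it inductively with \(f=\Phi_n\) and \(g\) the composition of the next \(k\) layers yields \(L_{n+k}\le L_nL^{(n)}_k\). Iterating the same inequality one layer at a time, together with \(\sup_\omega L(\Phi_\omega;K)\le B\), yields \(L_n\le B^n\).

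Independence and equality in law of \(L^{(n)}_k\) with \(L_k\) follow because \(L^{(n)}_k\) is the same measurable functional applied to the i.i.d.\ block \((\omega_{n+1},\dots,\omega_{n+k})\), which is independent of \((\omega_1,\dots,\omega_n)\). I expect the main (though mild) obstacle to be the bookkeeping in the submultiplicativity step. One must handle a degenerate \(\gamma_i\), which is covered by including degenerate segments in \(\mathcal S(K)\). One must also check that a partition counted by \(A\) may consist of intervals sharing endpoints, so that concatenating partitions is legitimate and the minimality in the definition of \(A\) only helps.
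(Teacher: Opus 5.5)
Your proposal is correct and follows essentially the same route as the paper. The paper proves $L(H\circ G;K)\le L(G;K)\,L(H;K)$ by partitioning according to $G\circ\gamma$ and reparametrizing each piece into $\mathcal S(K)$ (Lemma~\ref{lem:submultiplicative-line}). It then verifies $1\le L_n\le B^n$ and the independence and equality in law of the shifted block, and invokes the abstract principle Proposition~\ref{prop:abstract-pressure}, which proceeds via Fekete's lemma, log-Laplace convexity and exponential Markov bounds, just as you outline.
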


The supremum over all affine segments in Theorem~\ref{thm:main-line-pressure} is important. For a fixed deterministic segment \(\gamma\), the image \(\Phi_n\circ\gamma\) is generally a random piecewise affine curve rather than the original segment \(\gamma\). Consequently, one obtains a comparison with the worst-line complexity of the future block, not a closed submultiplicative inequality for the fixed-line observable itself.

\subsection{Large deviation lower bounds}\label{subsec:ldp-lower}

The pressure argument gives exponential upper bounds for the tails of the true observable \(N_n\). 
The lower-bound problem is more delicate. There is no corresponding supermultiplicative inequality for the true branch count, and therefore no direct lower-bound analogue of the pressure argument.

\subsubsection{Geometric obstruction and endpoint defects}

The geometric obstruction is that branch growth is no longer automatic once the layers are random. In the deterministic map, every affine branch crosses the peak of the next layer and splits into two descendants. Under perturbations, this can fail as illustrated in Figure \ref{fig:endpoint-defects-certified-splitting}. If the image interval of a branch lies too close to one endpoint of \([0,1]\), it may miss the next peak and produce only one descendant. In this sense, one side of the interval has lost its safety margin, and the branch has lost some of its future splitting potential.

Thus, the total number of branches alone does not contain enough information to propagate lower bounds. One must also record how close branch images are to losing the ability to split. For a branch whose image is \(J=[u,v]\), the relevant quantities are the endpoint defects
\[
L(J)=u,
\qquad
R(J)=1-v.
\]
Small endpoint defects guarantee that the interval crosses the peak of the next layer and hence must split. The lower-bound construction below keeps only a coarse record of these defects. 

\begin{figure}[t]
\centering
\begin{tikzpicture}[x=1.25cm,y=1.15cm,>=Latex,font=\small]
\tikzset{
  axis/.style={black,line width=.45pt},
  tent/.style={black,line width=.9pt},
  guide/.style={black,dashed,line width=.45pt},
  interval/.style={black,line width=1.8pt},
  defect/.style={<->,black,line width=.45pt}
}

\def\endpointpanel#1#2#3#4{%
\begin{scope}[xshift=#1]
  \node at (2.25,2.43) {#2};
  \draw[axis] (-.05,0) -- (4.55,0);
  \draw[tent] (0,0) -- (2.25,1.65) -- (4.5,0);
  \draw[guide] (2.25,0) -- (2.25,1.65);
  \node[above] at (2.25,1.65) {$c(\omega)$};
  \node[below] at (0,0) {$0$};
  \node[below] at (4.5,0) {$1$};

  \draw[interval] (#3,.22) -- (#4,.22);
  \draw[axis] (#3,0) -- (#3,.22);
  \draw[axis] (#4,0) -- (#4,.22);
  \node[above] at ({(#3+#4)/2},.22) {$J=[u,v]$};
  \node[below] at (#3,0) {$u$};
  \node[below] at (#4,0) {$v$};

  \draw[defect] (0,-.43) -- (#3,-.43);
  \node[below] at ({#3/2},-.43) {$L(J)=u$};
  \draw[defect] (#4,-.43) -- (4.5,-.43);
  \node[below] at ({(#4+4.5)/2},-.43) {$R(J)=1-v$};
\end{scope}%
}

\endpointpanel{0cm}{(a) $c(\omega)\in J$}{1.15}{3.35}
\endpointpanel{5.99cm}{(b) $c(\omega)\notin J$}{2.85}{4.00}
\end{tikzpicture}
\caption{ In panel (a), the image interval crosses the peak of the next layer, so a split is certified. In panel (b), the image interval misses the peak, so the certified lower process does not record a split.}
\label{fig:endpoint-defects-certified-splitting}
\end{figure}

\subsubsection{Coarse graining }\label{subsubsec:coarse-graining-m}

We now turn the endpoint-defect observation into a finite-state lower process. Set \(\delta_\varepsilon:=\frac12\inf_{\omega\in U_\varepsilon}\min\{c(\omega),1-c(\omega)\}\), \(K_\varepsilon:=2+\varepsilon\), and \(\eta(\omega):=1-h(\omega)\). Since \(a,b\in[2-\varepsilon,2+\varepsilon]\), one has \(\delta_\varepsilon=(2-\varepsilon)/8\). For a fixed integer \(m\ge1\), define
\[
r_i:=\delta_\varepsilon K_\varepsilon^{-i},
\qquad i=0,\dots,m.
\]
A layer is called \emph{\(m\)-favorable} if \(\eta(\omega)\le r_m\).

The thresholds \(r_i\) form a finite defect scale. A larger value of \(i\) means a smaller certified defect and therefore a longer remaining defect budget. One step of the tent recursion can multiply an old endpoint defect by at most \(K_\varepsilon\), so a defect controlled by \(r_i\) is transformed into one controlled by \(r_{i-1}\). Increasing \(m\) improves the certified exponent available from the favorable chain, but it also makes the strongest favorable condition \(\eta(\omega)\le r_m\) more restrictive.

For fixed \(m\), define the coarse alphabet \(\mathcal A_m:=\{0,1,\dots,m,\partial\}\) and the coarse symbol \(\kappa_m(\omega)\) by \(\kappa_m(\omega):=\max\{\ell\in\{0,\dots,m\}:\eta(\omega)\le r_\ell\}\) if \(\eta(\omega)\le r_0\), and \(\kappa_m(\omega):=\partial\) otherwise. Thus, \(\kappa_m(\omega)=m\) is exactly the event that the layer is \(m\)-favorable. We write \(\pi=\pi_{m,\varepsilon}\) for the law of \(\kappa_m(\omega_1)\), and \(p_m:=\pi(m)=\mathbb P(\kappa_m(\omega_1)=m)\). The distribution of the layer does not choose \(m\), but it determines whether a given choice of \(m\) is useful. In particular, if \(p_m=0\), then the all-favorable lower-bound mechanism at level \(m\) is unavailable.
A certified token in state \((i,j)\) attached to a branch with image \(J\) means that
\(
L(J)\le r_i,
\)
and
\(
R(J)\le r_j.
\)
The indices \(i\) and \(j\) are the remaining defect budgets for the left and right endpoints. This state information is not meant to be an exact description of all branches; it is only the certificate needed to propagate a lower count.

For fixed \(m\), the favorable subsystem has states \(s_i=(i,m)\), \(i=0,\dots,m-1\). These are the states whose right endpoint has the strongest certificate. On this chain, one \(m\)-favorable layer acts by the \(m\times m\) matrix \(A_m\) with first row consisting of ones, subdiagonal entries equal to one, and all other entries equal to zero. Let \(\rho_m\) be its Perron root, that is, its spectral radius.

\begin{example}[The Fibonacci mechanism at level \(m=2\)]\label{ex:fibonacci-level-two}
At defect budget \(m=2\), the favorable chain has two states,
\[
s_1=(1,2),
\qquad
s_0=(0,2).
\]
Under a \(2\)-favorable layer, a certified branch in state \(s_1\) produces two certified favorable descendants: one in state \(s_1\) and one in state \(s_0\). A certified branch in state \(s_0\) produces only one certified favorable descendant, in state \(s_1\). Thus, in the ordered basis \((s_1,s_0)\), the certified population evolves according to
\[
A_2=
\begin{pmatrix}
1&1\\
1&0
\end{pmatrix}.
\]
Starting with one top-state token, the total certified populations are \(1,2,3,5,\dots\), and the exponential growth rate is the Perron root \(\rho_2=(1+\sqrt{5})/2\). The non-integer rate \(\rho_2\) is not a fractional one-step branching factor. Each certified branch still has either one or two certified descendants. The Perron root is the asymptotic growth rate of the deterministic mixture of these two branch types.
\end{example}

We now provide two results illustrating the use of the favorable concept to obtain lower bounds.

\begin{theorem}[Perron growth of the favorable chain]\label{thm:main-perron}
	One has \(\rho_1=1\), while \(\rho_m\in(1,2)\) for every \(m\ge2\). Moreover, \(2-\rho_m=\rho_m^{-m}\), \(\rho_m\uparrow2\), \(2-\rho_m\sim2^{-m}\), and \(\log 2-\log\rho_m\sim2^{-m-1}\). If the first \(n\) layers are all \(m\)-favorable, then \(N_n\rho_m^{-n}>0\).
\end{theorem}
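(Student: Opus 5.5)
The plan is to treat the spectral statements and the growth statement separately. The spectral part reduces to an explicit characteristic polynomial. The growth statement follows from a one-step certification lemma that shows one \(m\)-favorable layer acts on certified tokens exactly through \(A_m\).

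\emph{Spectral part.} For \(m=1\) we have \(A_1=(1)\), so \(\rho_1=1\). For \(m\ge2\), expanding along the first row of the companion matrix gives the characteristic polynomial
\[
\chi_m(\lambda)=\lambda^m-\lambda^{m-1}-\dots-\lambda-1 .
\]
Multiplying by \(\lambda-1\) gives \(\lambda^{m+1}-2\lambda^m+1=0\), that is, \(2-\lambda=\lambda^{-m}\) for every root \(\lambda\neq1\). Equivalently, \(\rho_m\) is the unique positive solution of
\[
g_m(\rho):=\sum_{k=1}^m\rho^{-k}=1 .
\]
The matrix \(A_m\) is nonnegative and irreducible (the graph has the cycle \(s_{m-1}\to s_{m-2}\to\dots\to s_0\to s_{m-1}\)). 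It is also aperiodic, because of the loop at the top state together with a cycle of length \(m\). Hence Perron--Frobenius applies, and \(\rho_m\) is this positive root. Since \(\chi_m(1)=1-m<0\) and \(\chi_m(2)=1>0\), we get \(\rho_m\in(1,2)\).

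Monotonicity follows because \(g_{m+1}>g_m\) pointwise and each \(g_m\) is decreasing in \(\rho\), so \(\rho_{m+1}>\rho_m\). For the limit, \(\rho_m\ge\rho_2>1\) gives \(2-\rho_m=\rho_m^{-m}\le\rho_2^{-m}\to0\), hence \(\rho_m\uparrow2\). For the sharp rate, write
\[
\rho_m^{-m}=2^{-m}\Bigl(1-\tfrac{2-\rho_m}{2}\Bigr)^{-m}.
\]
Since \(m(2-\rho_m)\le m\rho_2^{-m}\to0\), the bracketed factor tends to \(1\), so \(2-\rho_m\sim2^{-m}\). Finally,
\[
\log2-\log\rho_m=-\log\Bigl(1-\tfrac{2-\rho_m}{2}\Bigr)\sim\tfrac{2-\rho_m}{2}\sim2^{-m-1}.
\]

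\emph{Certification lemma (the main step).} Let the layer \(\omega\) satisfy \(\eta(\omega)\le r_m\), and let a branch have image \(J=[u,v]\) with \(L(J)\le r_i\) and \(R(J)\le r_m\), where \(0\le i\le m-1\).
\begin{itemize}
\item \emph{The branch splits.} We have \(u\le r_0=\delta_\varepsilon<c(\omega)\) and \(v\ge1-r_m\ge1-\delta_\varepsilon>c(\omega)\), so \(J\) crosses the peak and the branch splits into two affine children.
\item \emph{Left child.} Its image is \([au,h(\omega)]\). Its left defect is \(au\le K_\varepsilon r_i=r_{i-1}\) when \(i\ge1\), and its right defect is \(\eta(\omega)\le r_m\). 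So it is certified in state \(s_{i-1}\) when \(i\ge1\), and is discarded when \(i=0\).
\item \emph{Right child.} Its image is \([b(1-v),h(\omega)]\). Its left defect is at most \(K_\varepsilon r_m=r_{m-1}\) and its right defect is at most \(r_m\), so it is always certified in state \(s_{m-1}\).
\end{itemize}
These transitions (\(s_i\mapsto s_{i-1}\) together with \(s_i\mapsto s_{m-1}\)) are exactly the columns of \(A_m\) in the ordering whose first coordinate is \(s_{m-1}\).

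\emph{Growth of \(N_n\).} The initial branch has image \([0,1]\), so both defects vanish. It is therefore certified in \(s_{m-1}\), since the certificate only requires upper bounds on the defects.

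Distinct certified branches are disjoint subintervals of \([0,1]\), and on each of them \(F_n\) is affine with nonzero slope. Adjacent children of a split have slopes of opposite sign, so no two certified branches merge into a single affine piece. Hence, if the first \(n\) layers are \(m\)-favorable,
\[
N_n\ge \mathbf 1^{\top}A_m^n e_{s_{m-1}} .
\]
By primitivity and Perron--Frobenius, \(A_m^n e_{s_{m-1}}\rho_m^{-n}\to v\,(w^\top e_{s_{m-1}})\), where \(v,w\) are the positive right and left Perron eigenvectors. This limit is strictly positive, and \(A_m^n e_{s_{m-1}}\rho_m^{-n}\) is also positive for every \(n\). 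Therefore \(\inf_n N_n\rho_m^{-n}>0\), and in particular \(N_n\rho_m^{-n}>0\).

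The step needing the most care is the certification lemma. One must check that \(\delta_\varepsilon\) and \(r_m\) lie on the correct sides of \(c(\omega)\) uniformly over \(U_\varepsilon\), and that every defect bound stays within \(K_\varepsilon\) times the previous threshold.
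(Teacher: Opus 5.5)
Your proposal is correct and follows essentially the same route as the paper. The spectral part matches Proposition~\ref{prop:perron-root-asymptotics} step for step: the characteristic polynomial, \(\sum_{j=1}^m\rho_m^{-j}=1\), multiplication by \(\rho_m-1\), and \(m(2-\rho_m)\to0\). The growth part is Proposition~\ref{prop:all-favorable-growth}, namely certified tokens evolving by \(A_m\) followed by Perron--Frobenius, and you prove the meaningful form \(\inf_n N_n\rho_m^{-n}>0\); the literal claim \(N_n\rho_m^{-n}>0\) is trivial since \(N_n\ge1\).

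The differences are cosmetic. You check the one-step certification directly on the favorable chain and start from one token in \(s_{m-1}\) rather than the paper's seeded \(2e_\star\), which only changes \(c_m\). Your remark that distinct certified branches are distinct affine pieces addresses a point the paper leaves implicit.
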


\begin{theorem}[All-favorable upper-tail lower bound]\label{thm:main-all-favorable}
If \(p_m>0\), then, for every \(\delta>0\), one has \(\liminf_{n\to\infty} n^{-1}\log\mathbb P(n^{-1}\log N_n\ge\log\rho_m-\delta)\ge \log p_m\). Moreover, \(\mathbb E[N_n]\ge c_m(p_m\rho_m)^n\) for all \(n\ge1\). In particular, if \(p_m\rho_m>1\), then \(\mathbb E[N_n]\) grows exponentially.
\end{theorem}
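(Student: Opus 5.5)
The plan is to derive Theorem~\ref{thm:main-all-favorable} from the deterministic certified growth behind Theorem~\ref{thm:main-perron}, made quantitative. The key input is a pathwise bound of the form
\[
N_n\ge c_m\rho_m^n \qquad\text{on the event } E_n:=\{\kappa_m(\omega_k)=m \text{ for all } k\le n\},
\]
for a constant \(c_m>0\) depending only on \(m\) and \(\varepsilon\). To obtain it, I first check that the initial branch, with image \(J=[0,h(\omega_1)]\) after one layer (or the identity on \([0,1]\) before any layer), carries a certified token in some favorable state. Its left defect is \(0\le r_i\) for every \(i\), and its right defect \(1-h(\omega_1)=\eta(\omega_1)\le r_m\) on \(E_n\). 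So it sits in the top state \(s_{m-1}\) (or better), up to possibly spending one initial layer.

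Next I use the transition rule encoded by \(A_m\), which is established in Section~\ref{sec:favorable} and which I take as given from the description before Theorem~\ref{thm:main-perron}. Under an \(m\)-favorable layer, each token in state \(s_i\) produces certified descendants whose counts are given by the \(i\)-th column of \(A_m\). Distinct certified tokens sit on distinct affine pieces of \(F_n\), so \(N_n\) is at least the total certified population \(\mathbf 1^\top A_m^{n-1}e_{\mathrm{top}}\).

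To bound this population, I note that \(A_m\) is a companion matrix whose characteristic polynomial is \(\lambda^m-\lambda^{m-1}-\cdots-1\). It is irreducible, and it is primitive because the first-row entry \((A_m)_{11}=1\) gives a positive diagonal entry. Perron--Frobenius then gives \(\mathbf 1^\top A_m^{k}e_{\mathrm{top}}\sim C\rho_m^k\) with \(C>0\). Hence \(\mathbf 1^\top A_m^{k}e_{\mathrm{top}}\ge c\rho_m^k\) for all \(k\ge0\) after shrinking \(c\) to cover finitely many small \(k\). Absorbing the one-layer shift into \(c_m\) gives the pathwise bound.

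With the pathwise bound in hand, both claims follow quickly. Since the \(\omega_k\) are i.i.d., \(\mathbb P(E_n)=p_m^n\). First,
\[
\mathbb E[N_n]\ge \mathbb E[N_n\mathbf 1_{E_n}]\ge c_m\rho_m^n p_m^n,
\]
which is the expectation bound, and it grows exponentially when \(p_m\rho_m>1\). Second, fix \(\delta>0\). For \(n\) large we have \(n^{-1}\log c_m\ge-\delta\), so \(E_n\subseteq\{n^{-1}\log N_n\ge\log\rho_m-\delta\}\). Therefore \(\mathbb P(n^{-1}\log N_n\ge\log\rho_m-\delta)\ge p_m^n\), and taking logarithms and \(\liminf\) gives the bound \(\log p_m\).

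The main obstacle is the certification step. I need that, on an \(m\)-favorable layer, a token in state \((i,m)\) with \(i\ge1\) really yields two pieces in states \((i-1,m)\) and \((m-1,m)\), i.e.\ the top state \(s_{m-1}\). I also need that a token in state \((0,m)\) yields one piece in the top state, and that distinct pieces remain distinct. The geometric facts to verify are the following. First, \(L(J)\le r_i\le\delta_\varepsilon<c(\omega)\) and \(R(J)\le r_m<1-c(\omega)\), so \(J\) crosses the peak. Second, the two children's images each contain the value \(h(\omega)\), giving right defect \(\eta(\omega)\le r_m\). Third, the left child's image has left endpoint \(aL(J)\le K_\varepsilon r_i=r_{i-1}\), while the right child's image has left endpoint \(bR(J)\le K_\varepsilon r_m=r_{m-1}\). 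Finally, for \(i=0\) the left endpoint certificate is lost, so only the right child is recorded, which accounts for the first row of ones and the subdiagonal of \(A_m\). I would rely on the Section~\ref{sec:favorable} construction for these details, and check the exact orientation: which child lands in which state must match the column/row convention of \(A_m\).
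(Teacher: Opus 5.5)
Your proposal is correct and follows essentially the same route as the paper. The paper also first proves the pathwise bound $N_n\ge c_m\rho_m^n$ on the all-favorable event (Proposition~\ref{prop:all-favorable-growth}), using the certified tokens of Proposition~\ref{prop:Z-lower-bound}, a seed layer that produces top-state tokens, and Perron--Frobenius for the primitive matrix $A_m$; it then concludes with $\mathbb P(E_n)=p_m^n$ and $\mathbb E[N_n]\ge\mathbb E[N_n\mathbf 1_{E_n}]$. The only cosmetic difference is that the paper keeps both seed children, starting from $2e_\star$, while you keep one, and that factor of $2$ is harmlessly absorbed into $c_m$.
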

Theorem~\ref{thm:main-perron} explains the role of large \(m\): the deterministic certified exponent \(\log\rho_m\) can be made arbitrarily close to \(\log 2\). But this does not mean that \(m\) should always be taken large. For a fixed law \(\mu_\varepsilon\), the probability \(p_m=\mathbb P(\eta(\omega_1)\le r_m)\) may decrease with \(m\), or even vanish.

Theorem \ref{thm:main-all-favorable} is a statement about the true branch count \(N_n\). Its proof simply forces the event that all layers are \(m\)-favorable and then uses the deterministic Perron growth from Theorem~\ref{thm:main-perron}. It should not be read as favoring large \(m\) in general. For a prescribed target exponent \(x<\log 2\), the all-favorable bound is most naturally invoked with the smallest \(m\) for which \(\log\rho_m>x\), up to the arbitrarily small slack allowed by \(\delta\), because larger \(m\) typically lowers \(p_m\) and therefore worsens the forcing cost \(\log p_m\). Likewise, the expectation bound is controlled by the product \(p_m\rho_m\), whose maximizing value of \(m\) need not be large.

\subsubsection{Bridged lower bounds for the upper tail}\label{subsubsec:bridged-lower-main}

The all-favorable event is useful but restrictive, because it requires every layer to have the maximally favorable symbol \(m\). 
We now provide a different construction that weakens this requirement. Instead of forcing all layers to be favorable, we divide the word into long free blocks and short single-symbol slots that we henceforth refer to as  \emph{bridges}. The free blocks are not required to be all-favorable; they may contain arbitrary coarse symbols, provided that they are productive for the certified lower transfer process introduced below. Only the bridge slots are forced to be maximally favorable.

We now make this construction more precise. Each symbol \(a\in\mathcal A_m\) determines its own linear update map \(M(a)\), and the update along a word is obtained by composing these symbol-dependent maps.  
Let
\[
\Sigma_m:=\{(i,j):0\le i,j\le m\}.
\]
We work on the vector space with basis \(\{e_{(i,j)}:(i,j)\in\Sigma_m\}\). A
basis vector \(e_{(i,j)}\) represents one certified token whose two remaining
endpoint-defect budgets are \(i\) and \(j\). The purpose of the auxiliary
transfer process is to keep only those descendants whose certificates can be verified by
the coarse symbols.
For \(a\in\mathcal A_m\), define a linear map \(M(a)\) on this vector space as
follows. If \(a\in\{0,\dots,m\}\), set
\[
M(a)e_{(i,j)}
:=
\mathbf 1_{\{i\ge1\}}e_{(i-1,a)}
+
\mathbf 1_{\{j\ge1\}}e_{(j-1,a)}.
\]
If \(a=\partial\), set \(M(\partial)e_{(i,j)}:=0\). The rule should be read as
a bookkeeping rule for certified continuations. A token in state \(e_{(i,j)}\)
may be continued from any endpoint whose budget is still positive. Continuing
from the endpoint with budget \(i\) consumes one unit of that budget and produces
\(e_{(i-1,a)}\). Continuing from the endpoint with budget \(j\) consumes one
unit of that budget and produces \(e_{(j-1,a)}\). In both cases, the new state is
oriented so that the remaining budget of the endpoint just used appears in the
first coordinate, while the newly read symbol \(a\) becomes the refreshed second
coordinate. If \(a=\partial\), no certified continuation is allowed, and all
mass is killed.
For example, if \(m\ge3\), then
\(
M(3)e_{(2,1)}=e_{(1,3)}+e_{(0,3)}.
\)
The two summands correspond to the two possible endpoint continuations. In
contrast,
\(
M(1)e_{(0,2)}=e_{(1,1)},
\)
because the endpoint with budget \(0\) cannot be used. Thus, \(M(a)\) records
all certified continuations through the symbol \(a\), while discarding
uncertified descendants.

The favorable chain is the subspace
\[
F_m
:=
\operatorname{span}\{e_{s_i}:s_i=(i,m),\ i=0,\dots,m-1\}.
\]
These are the auxiliary states whose second coordinate is the strongest
certificate value \(m\). In this sense, \(F_m\) is the part of the auxiliary
state space that remains well positioned for later bridging.
Let \(P_F\) denote the coordinate projection onto \(F_m\). Thus, \(P_F\) keeps
the coordinates belonging to the favorable chain and kills all other
coordinates. Let \(\mathbf 1_F\) be the vector on \(F_m\) whose coordinates in
the basis \(\{e_{s_i}:0\le i\le m-1\}\) are all equal to one. Consequently, for
any nonnegative vector \(x\), the scalar \(\mathbf 1_F^\top P_Fx\) is the total
mass of \(x\) lying on the favorable chain.
Finally, let
\(
e_\star:=e_{s_{m-1}}=e_{(m-1,m)}
\)
be the top favorable-chain basis vector. For a word
\(w=(a_1,\dots,a_\ell)\), read from left to right, write
\[
M(w):=M(a_\ell)\cdots M(a_1).
\]
We define the certified favorable block score by
\[
\Gamma(w):=\mathbf 1_F^\top P_FM(w)e_\star.
\]
In words, \(\Gamma(w)\) is the total mass of certified tokens that lie on the
favorable chain after the block \(w\) has been read, starting from one token in state \(e_\star\). It is not the total number of certified
tokens produced by the block. It counts only those tokens ending in
favorable-chain states and thus available for the subsequent bridge
construction.
The bridge word is the one-letter word
\[
b^{(m)}:=(m)\in\mathcal A_m.
\]
Its role is to return  to the top favorable state
\(e_\star=e_{(m-1,m)}\), from which the next free block can be started.
Indeed, every favorable-chain state \(s_i=(i,m)\), \(0\le i\le m-1\), produces at least one copy of \(e_\star\) after reading the single favorable symbol \(m\). Consequently, for all words \(u,v\),
\[
\Gamma\big(vb^{(m)}u\big)\ge \Gamma(v)\Gamma(u).
\]
Thus, inserting one maximally favorable layer between two free blocks makes the certified favorable score supermultiplicative.

\begin{example}
It may be useful to see this bridge in the smallest nontrivial case. When
\(m=2\), the favorable chain is
\[
F_2=\operatorname{span}\{e_{s_1},e_{s_0}\},
\qquad
s_1=(1,2),\quad s_0=(0,2),
\]
and \(e_\star=e_{s_1}\). Reading the favorable symbol \(2\) gives
\[
M(2)e_{s_1}=e_{s_1}+e_{s_0},
\qquad
M(2)e_{s_0}=e_{s_1}.
\]
Thus, in the ordered basis \((e_{s_1},e_{s_0})\), the favorable-chain transition matrix is
\[
A_2=
\begin{pmatrix}
1&1\\
1&0
\end{pmatrix}.
\]
\end{example}

Suppose that the favorable part after reading a word \(u\) is
\(
P_FM(u)e_\star=x_1e_{s_1}+x_0e_{s_0}.
\)
Then, \(\Gamma(u)=x_1+x_0\). After one bridge symbol \(2\), the coefficient of the top state \(e_\star=e_{s_1}\) is exactly \(x_1+x_0\). Each such copy of \(e_\star\) can be used as a fresh starting token for the next word \(v\), contributing \(\Gamma(v)\) favorable tokens. This gives
\[
\Gamma\big(vb^{(2)}u\big)\ge \Gamma(v)\Gamma(u).
\]
The general case is the same restart mechanism with \(m\) possible budget levels instead of two.

The reason the bridge helps is deterministic. The certified score \(\Gamma(w)\) is not multiplicative in \(w\): after a block \(u\), the surviving certified tokens may end in different favorable-chain states. The one-symbol bridge \(b^{(m)}=(m)\) refreshes every such favorable token by creating a copy of the common restart state \(e_\star\). Thus, the bridge converts many individually productive free blocks into one productive long word. Probabilistically, the cost is paid only for the bridge slots, not for every layer. If the free blocks have length \(r\), the forced favorable fraction is roughly \(1/(r+1)\), and this overhead becomes small when \(r\) is large. The lower bound in Theorem~\ref{thm:main-bridge-lower} below is exactly this tradeoff: a relative-entropy cost for choosing productive free blocks, plus the explicit cost of inserting favorable bridges.

Fix a free-block length \(r\ge1\). Let \(\mathcal W_r:=\mathcal A_m^r\), \(\pi_r:=\pi^{\otimes r}\), \(\mathcal G_r:=\{w\in\mathcal W_r:\Gamma(w)>0\}\), and \(g_r(w):=\log\Gamma(w)\) for \(w\in\mathcal G_r\). The probability of one bridge word is \(p_b:=\pi(b^{(m)})=p_m\). Assuming \(p_b>0\), define the bridge construction rate \(J_{r,m}^{+}(x)\) as the infimum of \((H(\nu\mid\pi_r)-\log p_b)/(r+1)\) over all \(\nu\in\mathcal P(\mathcal W_r)\) such that \(\nu(\mathcal G_r)=1\) and \(\int g_r\,d\nu>(r+1)x\). Here \(\mathcal P(\mathcal W_r)\) denotes the probability simplex on \(\mathcal W_r\), and \(H(\nu\mid\pi_r)\) denotes relative entropy. The infimum over the empty set is \(+\infty\).
The bridge construction is a sufficient lower-bound mechanism. It counts only certified branches and does not characterize all branch configurations that can lead to large values of \(N_n\).

\begin{theorem}[Bridged lower bound for the upper tail]\label{thm:main-bridge-lower}
Fix \(m,r\ge1\), and assume \(p_b>0\). Let \(n_k:=1+kr+(k-1)\). Then, for every \(x\in\mathbb R\), one has 
\[\liminf_{k\to\infty} n_k^{-1}\log\mathbb P(n_k^{-1}\log N_{n_k}>x)\ge -J_{r,m}^{+}(x).\]
\end{theorem}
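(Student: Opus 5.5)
The plan is to reduce the statement to an i.i.d. block large-deviation lower bound for the certified score $\Gamma$, using three ingredients in order: a pathwise comparison $N_n\ge$ (certified mass), the bridge supermultiplicativity $\Gamma(vb^{(m)}u)\ge\Gamma(v)\Gamma(u)$, and a change-of-measure lower bound for the free blocks. The length $n_k=1+kr+(k-1)$ splits into one initial layer, $k$ free blocks of length $r$, and $k-1$ bridge slots. On the event that the initial layer and the $k-1$ bridge slots all have coarse symbol $m$, there are exactly $k$ forced symbols. Their total cost is $k\log p_b$, and $k/n_k\to1/(r+1)$.

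\emph{Step 1 (certified comparison).} I would prove a lemma of the following form. Suppose a piece $I\subseteq[0,1]$ of the domain has $F_{n}$ affine and monotone on it, with image $J=[u,v]$ satisfying $L(J)\le r_i$ and $R(J)\le r_j$ with $i,j\ge0$, and suppose the next layer has symbol $a\in\{0,\dots,m\}$.
\begin{itemize}
\item Since $r_i,r_j\le\delta_\varepsilon<\min\{c,1-c\}$, the interval $J$ contains $c(\omega_{n+1})$.
\item Hence $I$ splits into two disjoint subpieces with images $[T(u),h]$ and $[T(v),h]$.
\item Their left defects satisfy $T(u)\le K_\varepsilon u\le r_{i-1}$ and $T(v)\le K_\varepsilon(1-v)\le r_{j-1}$ whenever $i\ge1$, respectively $j\ge1$. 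Their right defect is $\eta(\omega_{n+1})\le r_a$.
\end{itemize}
This is exactly the rule defining $M(a)$, with the orientation convention. Distinct tokens therefore sit on pairwise disjoint subintervals of the domain, organized along a genealogical tree of pieces. Each such subinterval contains at least one affine piece of $F_n$, so $N_n\ge \mathbf 1^\top M(a_2\cdots a_n)x_1$, where $x_1$ is the certified vector produced by the first layer. For the first layer, $T_{\omega_1}([0,1])=[0,h]$ has defects $(0,\eta(\omega_1))$. I would check whether the symbol $m$ at layer one yields a token dominating $e_\star=e_{(m-1,m)}$, possibly after one more refresh. If it does not match exactly, I would absorb a bounded number of extra forced favorable layers, which does not change the rate.

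\emph{Step 2 (gluing).} Iterating the bridge inequality gives the pathwise bound on the forced event. Write $w_1,\dots,w_k\in\mathcal W_r$ for the free blocks. Then
\[
N_{n_k}\ \ge\ \mathbf 1_F^\top P_F M(w_k b^{(m)}\cdots b^{(m)} w_1)e_\star\ \ge\ \prod_{l=1}^k\Gamma(w_l).
\]
By independence of the layers, the free blocks are i.i.d. with law $\pi_r$ and are independent of the forced slots. Hence
\[
\mathbb P\bigl(\log N_{n_k}>n_kx\bigr)\ \ge\ p_b^{k}\,\pi_r^{\otimes k}\Bigl(\textstyle\sum_{l=1}^k g_r(w_l)>n_kx,\ w_l\in\mathcal G_r\ \forall l\Bigr).
\]

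\emph{Step 3 (tilting).} Fix $\nu$ with $\nu(\mathcal G_r)=1$, $\int g_r\,d\nu>(r+1)x$, and $H(\nu\mid\pi_r)<\infty$, so that $\nu\ll\pi_r$. Since $n_kx=k(r+1)x-x$, the law of large numbers under $\nu^{\otimes k}$ gives that the event in Step 2 has $\nu^{\otimes k}$-probability tending to one. The finite alphabet makes $g_r$ bounded on $\mathcal G_r$. The standard entropy change-of-measure inequality then gives
\[
\liminf_k k^{-1}\log\pi_r^{\otimes k}(\cdot)\ge -H(\nu\mid\pi_r).
\]
Dividing by $n_k\sim k(r+1)$ and taking the infimum over admissible $\nu$ yields $-J^+_{r,m}(x)$. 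The bound is trivial when the admissible set is empty.

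I expect the main obstacle to be Step 1. The difficulty is making rigorous that certified tokens inject into distinct affine pieces of $F_n$, and checking the orientation bookkeeping and the initial-layer certificate, including the edge cases $i=0$ or $j=0$ and the killing symbol $\partial$. I would handle this by induction on $n$, carrying a family of disjoint domain intervals indexed by the tokens.
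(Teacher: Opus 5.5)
Your proposal is correct and is essentially the paper's proof. The forced seed and $k-1$ bridge slots cost $p_mp_b^{k-1}=p_b^k$, and your Step 1 is Proposition~\ref{prop:Z-lower-bound}; the seed check closes at once because $M(m)e_{(m,m)}=2e_\star$, so no extra forced layers are needed. Iterating Lemma~\ref{thm:bridge-lemma} then gives $N_{n_k}\ge\prod_{i=1}^k\Gamma(W_i)$. The only difference is that you get the entropy cost $-H(\nu\mid\pi_r)$ by tilting to $\nu^{\otimes k}$ plus the law of large numbers, whereas the paper uses exact type classes $\tau_k\to\nu$; the two are interchangeable on the finite alphabet $\mathcal W_r$.

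One harmless slip: $n_k=1+kr+(k-1)=k(r+1)$ exactly, so $n_kx=k(r+1)x$, not $k(r+1)x-x$.
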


The initial \(+1\) in \(n_k=1+kr+(k-1)\) is only a seed layer. On the event that the first coarse symbol is \(m\), the initial branch produces certified tokens in the top favorable state \(e_\star\). This allows the later block construction to start from the vector used in \(\Gamma\). The seed cost is negligible on the exponential scale.

The parameters \(m\) and \(r\) have different roles. The parameter \(m\) sets the defect resolution and the favorable symbol used for bridging. The parameter \(r\) is a blocking parameter for the empirical law of the free blocks. For fixed \(m\), larger \(r\) reduces the bridge overhead per unit depth, but it does not remove the possible gap between the certified count \(\Gamma\) and the true branch count \(N_n\).

\begin{remark}[Regenerative interpretation]\label{rem:regenerative-interpretation}
The bridge construction has a regenerative interpretation. Once certified
mass lies in the favorable chain, a single favorable symbol \(m\) creates at
least one copy of the common restart state \(e_\star=e_{(m-1,m)}\) from every
favorable-chain state. Thus, favorable symbols serve as restart symbols
for the certified transfer process.
One can make this analogy literal by cutting the coarse-symbol sequence at
non-overlapping first occurrences of the word \(mm\): after each
cut, one waits until the first adjacent occurrence of \(mm\), consumes that
terminal word, and then cuts again. The resulting cycles are i.i.d., and each
cycle carries a length and a certified reward, namely the logarithm of the
number of regenerated \(e_\star\)-tokens produced by the corresponding matrix
product. This leads to a renewal-reward version of the same lower-bound
mechanism, with a variational cost of the form
\[
\inf_\nu
\frac{H(\nu\mid Q)}{\int s\,d\nu},
\]
where \(Q\) is the law of one regeneration cycle, \(s\) is its length, and the
infimum is restricted to cycle laws whose average certified reward exceeds the
desired linear growth rate. This viewpoint is analogous to large-deviation
results for regenerative and renewal-reward processes
\cite{KuczekCrank1991,GlynnWhitt1994,Zamparo2023}.
We do not use the regenerative formulation below. The fixed-block statement in
Theorem~\ref{thm:main-bridge-lower} is deliberately more elementary: it works on
the finite alphabet \(\mathcal W_r\), uses only the method of types, and keeps
the bridge cost explicit. The regenerative viewpoint is nevertheless a useful
way to understand why favorable symbols restore concatenability.
\end{remark}

\subsubsection{Local lower-tail exclusion near \(\log 2\)}

Finally, for fixed \(m\), the following condition says that the random environment is so close to the Telgarsky map that every layer is favorable at level \(m\). We say that the model is uniformly favorable at level \(m\) if \(\eta_\ast(\varepsilon):=\sup_{\omega\in U_\varepsilon}\eta(\omega)\le r_m\). In this case, every layer is \(m\)-favorable almost surely.
The deterministic mechanism behind the next theorem is the favorable-chain growth illustrated in Example~\ref{ex:fibonacci-level-two}. At general level \(m\), the same mechanism has a longer defect budget and Perron rate \(\rho_m\uparrow2\).

The order of choices is important. For a prescribed gap \(\xi\), one first chooses \(m\) so that \(\log\rho_m\) is close enough to \(\log 2\). Only after \(m\) is fixed does one choose the noise level \(\varepsilon\) small enough that all admissible layers are \(m\)-favorable.

\begin{theorem}[Local lower-tail exclusion near \(\log 2\)]\label{thm:main-local}
For every \(\xi>0\), there exist \(m(\xi)\ge1\), \(\varepsilon_\ast(\xi)>0\), and \(n_0(\xi)\ge1\) such that, whenever \(0<\varepsilon\le\varepsilon_\ast(\xi)\) and \(n\ge n_0(\xi)\), one has \[\mathbb P(n^{-1}\log N_n\le\log 2-\xi)=0.\]
\end{theorem}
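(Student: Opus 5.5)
The plan is to combine Theorem~\ref{thm:main-perron} with the uniformly favorable condition, keeping the order of choices stressed before the statement. Given \(\xi>0\), use \(\rho_m\uparrow 2\) from Theorem~\ref{thm:main-perron} to choose \(m=m(\xi)\ge2\) with \(\log\rho_m>\log 2-\xi/2\). With \(m\) fixed, choose the noise level. Since \(r_m=\delta_\varepsilon K_\varepsilon^{-m}=\tfrac{2-\varepsilon}{8}(2+\varepsilon)^{-m}\) is bounded below by \(\tfrac18 3^{-m}\) for \(\varepsilon\le1\), it suffices to show that \(\eta_\ast(\varepsilon)\to0\) as \(\varepsilon\to0\). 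Write \(h(a,b)=ab/(a+b)\). On \([2-\varepsilon,2+\varepsilon]^2\) this function is continuous, with \(h(2,2)=1\), so \(\inf_{U_\varepsilon}h\ge \min_{[2-\varepsilon,2+\varepsilon]^2}h = h(2-\varepsilon,2-\varepsilon)=1-\varepsilon/2\), because \(h\) is increasing in each coordinate. Hence \(\eta_\ast(\varepsilon)\le\varepsilon/2\). We can therefore take \(\varepsilon_\ast(\xi)\) so small that \(\varepsilon/2\le r_m\) for all \(\varepsilon\le\varepsilon_\ast\); for instance \(\varepsilon_\ast=\min\{1,\tfrac14 3^{-m}\}\). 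Then every layer is \(m\)-favorable almost surely, that is, \(\kappa_m(\omega_k)=m\) for all \(k\).

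On this almost-sure event we need a quantitative form of the last assertion of Theorem~\ref{thm:main-perron}. That assertion is stated only as \(N_n\rho_m^{-n}>0\), which is too weak by itself. I would extract from its proof (the favorable-chain certification) the bound \(N_n\ge \mathbf 1^\top A_m^{\,n-1}v_0\), where \(v_0\) is the initial certified vector produced by the first layer. After one layer the initial branch \([0,1]\) has image \([0,h(\omega_1)]\) with defects \(0\) and \(\eta(\omega_1)\le r_m\), which yields the token \(e_\star\) or a comparable one. Certified tokens correspond to distinct affine pieces, since each certified descendant is a distinct subinterval of the domain on which \(F_n\) is affine. It follows that \(N_n\ge \mathbf 1^\top A_m^{\,n-1}e_\star\) deterministically whenever all layers are \(m\)-favorable.

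Next, \(A_m\) is a companion matrix. It is irreducible, and primitive because of the \(1\) in position \((1,1)\). By Perron--Frobenius, \(\mathbf 1^\top A_m^{\,k}e_\star\ge c_m\rho_m^{k}\) for some \(c_m>0\) and all \(k\ge0\). Alternatively, one can use the recursion \(x_{k}=x_{k-1}+\dots+x_{k-m}\), whose characteristic polynomial gives \(2-\rho_m=\rho_m^{-m}\), together with a direct induction against \(\rho_m^k\). Therefore
\[
n^{-1}\log N_n\ge \log\rho_m+n^{-1}\log(c_m/\rho_m)>\log2-\xi
\]
as soon as \(n^{-1}|\log(c_m/\rho_m)|<\xi/2\). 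This fixes \(n_0(\xi)\), which depends only on \(m\) and hence only on \(\xi\). The conclusion is that \(\mathbb P(n^{-1}\log N_n\le\log2-\xi)=0\) for \(n\ge n_0\).

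The main obstacle is the middle step: verifying rigorously that the favorable-chain population lower-bounds the true count \(N_n\) uniformly, with a constant independent of \(\varepsilon\). This requires checking that the defect bookkeeping under one \(m\)-favorable layer really maps \((i,m)\) tokens to the prescribed children. The key facts are that a defect \(\le r_i\) becomes \(\le K_\varepsilon r_i= r_{i-1}\) after one layer, and that the fresh endpoint near the peak has defect \(\le\eta(\omega)\le r_m\). It also requires checking that \(r_i\le r_0<\min\{c,1-c\}\), so that the image interval crosses the peak. I would take this from the proof of Theorem~\ref{thm:main-perron}, making explicit that the constant \(c_m\) there depends only on \(m\).
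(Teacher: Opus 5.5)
Your proposal is correct and is essentially the paper's proof: you fix \(m\) first using \(\rho_m\uparrow 2\), then take \(\varepsilon\) small enough that \(\eta_\ast(\varepsilon)\le r_m\), and conclude from the deterministic bound \(N_n\ge c_m\rho_m^{n}\) on the all-favorable event. That bound is exactly the paper's Proposition~\ref{prop:all-favorable-growth}, and you are right to use it in place of the vacuous statement \(N_n\rho_m^{-n}>0\); your explicit estimates \(\eta_\ast(\varepsilon)\le\varepsilon/2\) and \(r_m\ge\tfrac{1}{8}\,3^{-m}\) also usefully make precise the paper's remark that \(r_m\) is ``fixed'' even though it depends on \(\varepsilon\). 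One justification needs repair: distinct subintervals on which \(F_n\) is affine do not by themselves bound the \emph{minimal} count \(N_n\) from below. The right reason is that each certified split point is a strict local extremum of the composition (slopes \(as\) and \(-bs\) with \(s\neq 0\)), and it remains a breakpoint under all later layers because every slope is nonzero; the paper also uses this identification of branches with affine pieces without stating it.
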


\section{Proofs of Theorems \ref{thm:main-pressure}, \ref{thm:main-polyhedral-pressure} and \ref{thm:main-line-pressure} --Subadditive pressure and exponential upper bounds}\label{sec:pressure}

This section proves Theorem~\ref{thm:main-pressure} and its higher-dimensional extensions, Theorems~\ref{thm:main-polyhedral-pressure} and \ref{thm:main-line-pressure}. The argument is organized in three steps. First, we isolate an abstract pressure theorem for any positive submultiplicative complexity observable with independent increments. Second, we verify its hypotheses for the one-dimensional branch-count observable \(N_n\). Third, we verify the same hypotheses for higher-dimensional affine-cover counts and worst-line affine-piece counts.

First, we record the elementary facts about the layer maps and their branch-count observable.

\subsection{Layer maps}

The first lemma identifies the peak and image of a single random tent layer.

\begin{lemma}\label{lem:tent-basic}
For every \(\omega=(a,b)\in U_\varepsilon\), the map \(T_\omega\) is continuous and piecewise affine with unique peak location \(c(\omega)=b/(a+b)\) and peak height \(h(\omega)=ab/(a+b)\le1\). In particular,
\[
T_\omega([0,1])=[0,h(\omega)]\subseteq [0,1].
\]
\end{lemma}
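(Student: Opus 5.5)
The plan is a direct computation from the definition \(T_\omega(x)=(\min(ax,b(1-x)))_+\). First I would compare the two affine functions \(x\mapsto ax\) and \(x\mapsto b(1-x)\) on \([0,1]\). Since \(a,b>0\), the first is strictly increasing and the second strictly decreasing, so their difference \(ax-b(1-x)=(a+b)x-b\) is strictly increasing with unique zero \(c(\omega)=b/(a+b)\in(0,1)\). Hence on \([0,1]\),
\[
\min(ax,b(1-x))=\begin{cases} ax, & 0\le x\le c(\omega),\\ b(1-x), & c(\omega)\le x\le 1,\end{cases}
\]
and this function is nonnegative on \([0,1]\). Thus the positive part does nothing on \([0,1]\), and \(T_\omega\) is the minimum of two affine functions. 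This gives continuity, and piecewise affinity with exactly two pieces \([0,c(\omega)]\) and \([c(\omega),1]\).

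Next I would identify the peak. On \([0,c(\omega)]\) the map \(T_\omega\) is strictly increasing, and on \([c(\omega),1]\) it is strictly decreasing. So its unique maximizer is \(c(\omega)\), with value
\[
a\cdot b/(a+b)=h(\omega).
\]
Uniqueness follows from the strict monotonicity on each side.

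Finally, for the image: \(T_\omega(0)=0\), \(T_\omega(c(\omega))=h(\omega)\), and \(T_\omega\) is continuous. By the intermediate value theorem, \([0,h(\omega)]\subseteq T_\omega([0,1])\). Conversely, \(0\le T_\omega\le h(\omega)\) on \([0,1]\), which gives \(T_\omega([0,1])=[0,h(\omega)]\). The inclusion in \([0,1]\) is exactly the defining constraint \(h(a,b)\le 1\) of \(U_\varepsilon\).

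There is no real obstacle here. The only point requiring care is checking that the positive part is inactive on \([0,1]\), so that the formula really is the two-piece tent. This holds because both affine functions are nonnegative on the relevant subintervals.
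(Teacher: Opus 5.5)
Your proof is correct and follows the same direct computation as the paper: find where \(ax\) and \(b(1-x)\) cross at \(c(\omega)\), read off the two-piece tent, and invoke \(h(\omega)\le1\). You also supply details the paper leaves implicit: that the positive part is inactive on \([0,1]\), that the peak is unique by strict monotonicity, and that the image is exactly \([0,h(\omega)]\) by the intermediate value theorem.
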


\begin{proof}
The affine functions \(x\mapsto ax\) and \(x\mapsto b(1-x)\) intersect at \(x=c(\omega)\), and their common value there is \(h(\omega)\). Since \(T_\omega\) is their minimum clipped below by \(0\), it is continuous and piecewise affine. The condition \(h(\omega)\le1\) gives \(T_\omega([0,1])\subseteq[0,1]\).
\end{proof}

The next proposition embeds every tent layer into the scalar ReLU formalism.
Thus, as mentioned before.
\begin{proposition}[ReLU representation]\label{prop:relu-representation}
Let \(\operatorname{ReLU}(t):=\max\{t,0\}\). For \(\omega=(a,b)\in U_\varepsilon\), with \(c=b/(a+b)\), one has
\[
T_\omega(x)=a\operatorname{ReLU}(x)-(a+b)\operatorname{ReLU}(x-c)+b\operatorname{ReLU}(x-1).
\]
\end{proposition}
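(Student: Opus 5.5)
The plan is to verify the identity pointwise on \([0,1]\), splitting at the kink \(c=b/(a+b)\in(0,1)\). Both sides are continuous and piecewise affine with possible kinks only at \(0\), \(c\) and \(1\), so it suffices to show they agree on \([0,c]\) and on \([c,1]\). The term \(b\operatorname{ReLU}(x-1)\) vanishes on \([0,1]\), since \(x-1\le0\) there. It enters only to make the right-hand side vanish for \(x\ge1\), consistently with the clipping \((\cdot)_+\); this extension is not needed for the statement on \([0,1]\), but it is worth noting.

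On \([0,c]\) we have \(x\ge0\) and \(x-c\le0\), so the right-hand side equals \(ax\). On the left, \(ax\le b(1-x)\) holds exactly when \(x\le b/(a+b)=c\). Hence \(\min(ax,b(1-x))=ax\ge0\), and the positive part does not alter it. On \([c,1]\) the right-hand side equals \(ax-(a+b)(x-c)=-bx+(a+b)c=b-bx=b(1-x)\), using \((a+b)c=b\). On the left, \(\min(ax,b(1-x))=b(1-x)\ge0\) for \(x\in[c,1]\), so the two sides again agree.

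No step is a genuine obstacle; the only care needed is the algebraic simplification \((a+b)c=b\) and checking that the positive part is inactive on \([0,1]\). The latter holds because \(a,b>0\) on \(U_\varepsilon\), so both affine branches are nonnegative on their respective subintervals. Consistency with Lemma~\ref{lem:tent-basic} (peak value \(ac=h(\omega)\)) provides a sanity check.
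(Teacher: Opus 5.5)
Your proof is correct and follows the same route as the paper: a direct piecewise check on \([0,c]\) and \([c,1]\), using \((a+b)c=b\). The only difference is scope. The paper also verifies the two outer intervals: for \(x<0\) both sides vanish since \(ax<0\), and for \(x\ge1\) the right-hand side equals \(b(1-x)+b(x-1)=0\) while \(\min(ax,b(1-x))\le0\). This gives the identity on all of \(\mathbb R\), which is where the clipping \((\cdot)_+\) is actually active, so you may want to add these one-line checks rather than restricting to \([0,1]\).
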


\begin{proof}
The right-hand side vanishes for \(x<0\). For \(0\le x\le c\), only the first ReLU is active, and the value is \(ax\). For \(c\le x\le1\), the first two ReLUs are active, and the value is \(ax-(a+b)(x-c)=b(1-x)\). For \(x\ge1\), all three ReLUs are active, and the value is \(ax-(a+b)(x-c)+b(x-1)=0\). This agrees with \(T_\omega\) on all four intervals.
\end{proof}

\subsection{An abstract submultiplicative pressure principle}

The next proposition is the common mechanism behind all our pressure upper bounds.

\begin{proposition}[Submultiplicative pressure principle]\label{prop:abstract-pressure}
Let \((C_n)_{n\ge1}\) be random variables such that
\begin{enumerate}
	\item \(1\le C_n\le B^n\) almost surely for a deterministic constant \(B\ge1\), and
	\item for every \(n,\ell\ge1\), there exists a random variable \(C_\ell'\) with the same law as \(C_\ell\), independent of \(C_n\), and satisfying \(C_{n+\ell}\le C_nC_\ell'\) almost surely. 
\end{enumerate}
	 Then for every \(q\in\mathbb R\), the limit \(\Lambda_C(q):=\lim_{n\to\infty} n^{-1}\log\mathbb E[C_n^q]\) exists. For \(q\ge0\), one has \(\Lambda_C(q)=\inf_{n\ge1} n^{-1}\log\mathbb E[C_n^q]\), while for \(q<0\), one has \(\Lambda_C(q)=\sup_{n\ge1} n^{-1}\log\mathbb E[C_n^q]\). Moreover, \(\Lambda_C\) is finite and convex on \(\mathbb R\), and \(\min\{0,q\log B\}\le\Lambda_C(q)\le\max\{0,q\log B\}\). If \(I_{C,+}(x):=\sup_{q\ge0}\{qx-\Lambda_C(q)\}\) and \(I_{C,-}(x):=\sup_{q\le0}\{qx-\Lambda_C(q)\}\), then, for every \(x\in\mathbb R\),
\[
\limsup_{n\to\infty} n^{-1}\log\mathbb P(Y_n\ge x)\le -I_{C,+}(x)
\qquad\text{and}\qquad
\limsup_{n\to\infty} n^{-1}\log\mathbb P(Y_n\le x)\le -I_{C,-}(x),
\]
where \(Y_n:=n^{-1}\log C_n\).
\end{proposition}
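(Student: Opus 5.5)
The plan is to prove that \(a_n(q):=\log\mathbb E[C_n^q]\) is subadditive for \(q\ge0\) and superadditive for \(q<0\), and then to apply Fekete's lemma and an exponential Chebyshev bound.

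\textbf{Finiteness and bounds.} Since \(1\le C_n\le B^n\) almost surely, the moments satisfy \(\mathbb E[C_n^q]\in[1,B^{nq}]\) for \(q\ge0\) and \(\mathbb E[C_n^q]\in[B^{nq},1]\) for \(q<0\). Hence
\[
\min\{0,q\log B\}\le n^{-1}a_n(q)\le\max\{0,q\log B\}
\]
for every \(n\), and in particular each \(a_n(q)\) is finite.

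\textbf{Sub- and superadditivity.} Take \(C_\ell'\) as in hypothesis (2). For \(q\ge0\), the map \(t\mapsto t^q\) is nondecreasing on \([1,\infty)\), so \(C_{n+\ell}^q\le C_n^q(C_\ell')^q\). Taking expectations, independence and equality in law give \(\mathbb E[C_{n+\ell}^q]\le\mathbb E[C_n^q]\,\mathbb E[C_\ell^q]\), that is, \(a_{n+\ell}\le a_n+a_\ell\). For \(q<0\), the map \(t\mapsto t^q\) is nonincreasing, so the inequality reverses and \(a_{n+\ell}\ge a_n+a_\ell\). The boundedness from the previous step rules out \(-\infty\) values. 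Fekete's lemma therefore gives existence of the limit, equal to \(\inf_n a_n/n\) for \(q\ge0\) and to \(\sup_n a_n/n\) for \(q<0\). The bounds on \(n^{-1}a_n(q)\) pass to the limit, so \(\Lambda_C\) is finite with the stated two-sided bounds. Convexity follows because each \(q\mapsto a_n(q)\) is convex by Hölder's inequality, and a pointwise limit of convex functions is convex. At \(q=0\) both regimes agree, giving \(\Lambda_C(0)=0\).

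\textbf{Tail bounds.} For \(q\ge0\), Markov's inequality applied to \(C_n^q\) gives
\[
\mathbb P(Y_n\ge x)\le e^{-nqx}\,\mathbb E[C_n^q].
\]
Hence \(\limsup_n n^{-1}\log\mathbb P(Y_n\ge x)\le -(qx-\Lambda_C(q))\), and optimizing over \(q\ge0\) yields \(-I_{C,+}(x)\). For \(q\le0\), the event \(\{Y_n\le x\}\) coincides with \(\{C_n^q\ge e^{nqx}\}\), so the same argument gives \(-I_{C,-}(x)\).

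\textbf{Main point of care.} The only real subtlety is that hypothesis (2) supplies a coupling separately for each pair \((n,\ell)\). This is all that is needed, because sub- and superadditivity concern only the moments of \(C_{n+\ell}\), \(C_n\) and \(C_\ell\), never a joint process. The remaining step is to check the direction of monotonicity of \(t\mapsto t^q\) on \([1,\infty)\) for negative \(q\), which is what makes the superadditive case work.
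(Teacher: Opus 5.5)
Your proof is correct and follows the paper's argument essentially step for step. Both use sub-/superadditivity of \(\log\mathbb E[C_n^q]\) via the independent copy \(C_\ell'\) and the monotonicity of \(t\mapsto t^q\), then Fekete's lemma, the deterministic moment bounds from \(1\le C_n\le B^n\), convexity as a pointwise limit of convex log-moment functions, and exponential Markov bounds for both tails. One harmless nit: at \(q=0\) the event \(\{Y_n\le x\}\) is only contained in, not equal to, \(\{C_n^q\ge e^{nqx}\}\).
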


\begin{proof}
	For \(q\in\mathbb R\), define \(a_n(q):=\log\mathbb E[C_n^q]\). Since \(1\le C_n\le B^n\), every moment \(\mathbb E[C_n^q]\) is finite and strictly positive, so \(a_n(q)\) is well defined. We treat the cases \(q \ge 0\) and \(q < 0\) separately.
	\medskip

\noindent	\(\boldsymbol{q\ge0}\). The map \(t\mapsto t^q\) is increasing on \((0,\infty)\), hence \(C_{n+\ell}^q\le C_n^q(C_\ell')^q\). Taking expectations and using independence of \(C_n\) and \(C_\ell'\) gives \(\mathbb E[C_{n+\ell}^q]\le\mathbb E[C_n^q]\mathbb E[C_\ell^q]\). Thus, \(a_{n+\ell}(q)\le a_n(q)+a_\ell(q)\), so Fekete's lemma yields existence of \(\lim_{n\to\infty} n^{-1}a_n(q)\) together with the infimum formula.
\medskip

	\noindent\(\boldsymbol{q<0}\). The map \(t\mapsto t^q\) is decreasing on \((0,\infty)\), hence \(C_{n+\ell}^q\ge C_n^q(C_\ell')^q\). Taking expectations and using independence gives \(\mathbb E[C_{n+\ell}^q]\ge\mathbb E[C_n^q]\mathbb E[C_\ell^q]\). Thus, \(a_{n+\ell}(q)\ge a_n(q)+a_\ell(q)\), so the superadditive version of Fekete's lemma yields existence of \(\lim_{n\to\infty} n^{-1}a_n(q)\) together with the supremum formula.

The deterministic bound \(1\le C_n\le B^n\) implies \(1\le C_n^q\le B^{nq}\) for \(q\ge0\), and \(B^{nq}\le C_n^q\le1\) for \(q<0\). After taking expectations, logarithms, dividing by \(n\), and passing to the limit, one obtains \(\min\{0,q\log B\}\le\Lambda_C(q)\le\max\{0,q\log B\}\). Hence, \(\Lambda_C\) is finite on \(\mathbb R\).
For each fixed \(n\), the map \(q\mapsto n^{-1}\log\mathbb E[C_n^q]\) is convex. Indeed, it is \(n^{-1}\) times the log-Laplace transform of the bounded random variable \(\log C_n\). Since \(\Lambda_C\) is the pointwise finite limit of these convex functions, the defining convexity inequality passes to the limit, and \(\Lambda_C\) is convex.

It remains to prove the tail bounds. Since \(e^{nqY_n}=C_n^q\), exponential Markov inequalities give
\[
\mathbb P(Y_n\ge x)\le e^{-nqx}\mathbb E[C_n^q]\quad(q\ge0),
\qquad
\mathbb P(Y_n\le x)\le e^{-nqx}\mathbb E[C_n^q]\quad(q\le0).
\]
Taking logarithms, dividing by \(n\), passing to \(\limsup\), and then optimizing over \(q\ge0\) in the first inequality and \(q\le0\) in the second proves the claimed bounds.
\end{proof}

\subsection{Application to branch counts in the random tent model}

We now specialize Proposition~\ref{prop:abstract-pressure} to the true branch count \(N_n\).

The deterministic counting inequality behind the next lemma is standard for
compositions of piecewise-affine maps; compare
\cite[Sec.~6.1]{PetersenZech2024}. We include the proof because the
shifted-block variable and its independence from the past are the probabilistic
input needed for Theorem~\ref{thm:main-pressure}.

\begin{lemma}\label{lem:submultiplicative-N}
For all \(n,\ell\ge1\), if \(F_{n,\ell}:=T_{\omega_{n+\ell}}\circ\cdots\circ T_{\omega_{n+1}}\), then
\[
N_{n+\ell}\le N_nN_\ell',
\]
where \(N_\ell':=A(F_{n,\ell};[0,1])\) has the same law as \(N_\ell\) and is independent of \(N_n\).
\end{lemma}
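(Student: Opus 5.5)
The argument splits into a deterministic counting inequality for compositions and a probabilistic statement about the shifted block.

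\emph{Deterministic step.} Write \(F_{n+\ell}=F_{n,\ell}\circ F_n\). By definition of \(N_n=A(F_n;[0,1])\), there is a partition of \([0,1]\) into \(N_n\) intervals \(I_1,\dots,I_{N_n}\) on each of which \(F_n\) is affine. Fix one piece \(I_k\).
\begin{itemize}
\item If \(F_n\) is constant on \(I_k\), then \(F_{n+\ell}\) is constant there and contributes one piece.
\item Otherwise \(F_n|_{I_k}\) is an affine bijection of \(I_k\) onto an interval \(J_k\subseteq[0,1]\); this uses Lemma~\ref{lem:tent-basic} for \(F_n([0,1])\subseteq[0,1]\).
\end{itemize}
Take a partition of \([0,1]\) into \(N_\ell'\) intervals on which \(F_{n,\ell}\) is affine. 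Intersecting it with \(J_k\) gives at most \(N_\ell'\) subintervals of \(J_k\) on which \(F_{n,\ell}\) is affine. Pulling these back through the affine bijection \(F_n|_{I_k}\) gives at most \(N_\ell'\) subintervals of \(I_k\) on which \(F_{n+\ell}\) is affine, since a composition of affine maps is affine.

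Collecting these subintervals over all \(k\) gives a partition of \([0,1]\) into at most \(N_nN_\ell'\) intervals on which \(F_{n+\ell}\) is affine. Since \(A\) is a minimum, \(N_{n+\ell}\le N_nN_\ell'\) holds pathwise.

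\emph{Endpoint bookkeeping.} Some care is needed with degenerate intersections, where \(J_k\) meets a cell only in a point, and with the fact that the partition pieces are subintervals sharing endpoints. I would handle this by working with closed intervals and discarding degenerate pieces; discarding them can only decrease the count. This is the only place requiring attention, and it is the step I expect to be the main, albeit mild, obstacle.

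\emph{Probabilistic step.} Measurability and the law of \(N_\ell'\) follow from the structure of the environment.
\begin{itemize}
\item \(N_\ell'\) is the same deterministic function of \((\omega_{n+1},\dots,\omega_{n+\ell})\) as \(N_\ell\) is of \((\omega_1,\dots,\omega_\ell)\). Since the \(\omega_i\) are i.i.d., the two vectors have the same law, so \(N_\ell'\overset{d}{=}N_\ell\).
\item \(N_n\) is a function of \((\omega_1,\dots,\omega_n)\) and \(N_\ell'\) is a function of \((\omega_{n+1},\dots,\omega_{n+\ell})\). These blocks are independent, so \(N_n\) and \(N_\ell'\) are independent.
\end{itemize}
Measurability of the map from parameters to affine-piece count can be seen explicitly. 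The breakpoints of \(F_n\) are obtained from the finitely many preimages of the peaks \(c(\omega_i)\) and of \(0\), and these depend piecewise continuously on the parameters.
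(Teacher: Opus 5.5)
Your proposal is correct and follows the paper's proof essentially step for step. You write \(F_{n+\ell}=F_{n,\ell}\circ F_n\), restrict an optimal affine partition of the future block to each image interval \(F_n(I_k)\subseteq[0,1]\), and pull it back through the affine piece \(F_n|_{I_k}\). Summing over the \(N_n\) pieces gives the inequality, and independence and equality in law follow from the disjoint i.i.d.\ environment blocks. Your explicit handling of constant pieces, degenerate endpoint intersections and measurability only spells out points that the paper leaves implicit.
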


\begin{proof}
Write \(F_{n+\ell}=H\circ G\) with \(G:=F_n\) and \(H:=F_{n,\ell}\). By definition of \(N_n=A(G;[0,1])\), there exists a partition of \([0,1]\) into \(N_n\) intervals \(J_1,\dots,J_{N_n}\) such that \(G\) is affine on every \(J_i\). Since \(G([0,1])\subseteq[0,1]\), the image \(G(J_i)\) is a compact subinterval of \([0,1]\).

By definition of \(N_\ell'=A(H;[0,1])\), the map \(H\) has an affine partition of \([0,1]\) with \(N_\ell'\) intervals. Restricting this partition to the subinterval \(G(J_i)\) shows that \(H\) has at most \(N_\ell'\) affine pieces on \(G(J_i)\). Since \(G\) is affine on \(J_i\), the composition \(H\circ G\) therefore has at most \(N_\ell'\) affine pieces on \(J_i\). Summing this bound over the \(N_n\) intervals \(J_i\) gives \(N_{n+\ell}\le N_nN_\ell'\).

The variable \(N_n\) is determined by \((\omega_1,\dots,\omega_n)\), while \(N_\ell'\) is determined by \((\omega_{n+1},\dots,\omega_{n+\ell})\). These environment blocks are independent because the layers are i.i.d. Hence, \(N_n\) and \(N_\ell'\) are independent. Since the block \((\omega_{n+1},\dots,\omega_{n+\ell})\) has the same law as \((\omega_1,\dots,\omega_\ell)\), the variable \(N_\ell'\) has the same law as \(N_\ell\).
\end{proof}

We now prove the first pressure theorem. It  gives exponential upper bounds for both tails of the true observable \(X_n=n^{-1}\log N_n\). It does not by itself prove a full large-deviation principle, because it does not provide matching lower bounds. The later bridge construction supplies one lower-bound mechanism for the upper tail, but not a complete converse to the pressure upper bound.

\begin{proof}[Proof of Theorem~\ref{thm:main-pressure}]
 Lemma~\ref{lem:submultiplicative-N} gives, for every \(n,\ell\ge1\), a shifted-block variable \(N_\ell'\) with the same law as \(N_\ell\), independent of \(N_n\), and satisfying \(N_{n+\ell}\le N_nN_\ell'\) almost surely. Thus, all hypotheses of Proposition~\ref{prop:abstract-pressure} hold with \(C_n:=N_n\) and \(B:=2\). The conclusion of Proposition~\ref{prop:abstract-pressure} is exactly the statement of Theorem~\ref{thm:main-pressure}.
\end{proof}

\subsection{Higher-dimensional affine-cover complexity}

We next prove Theorem~\ref{thm:main-polyhedral-pressure}. The main point is that convex-polytopal affine covers are exactly multiplicative under composition.

Let \(K\subseteq\mathbb R^d\) be a compact convex polytope. For a map \(f:K\to K\), recall that \(R_{\mathrm{cov}}(f;K)\) is the smallest integer \(M\ge1\) for which \(K\) can be covered by compact convex polytopes \(P_1,\dots,P_M\subseteq K\) such that \(f\) agrees on each \(P_i\) with the restriction of an affine map on \(\mathbb R^d\).

\begin{lemma}[Submultiplicativity of affine-cover counts]\label{lem:submultiplicative-cover}
For piecewise affine \(G,H:K\to K\),
\[
R_{\mathrm{cov}}(H\circ G;K)\le R_{\mathrm{cov}}(G;K)\,R_{\mathrm{cov}}(H;K).
\]
\end{lemma}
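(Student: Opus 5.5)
The plan is to construct an explicit affine cover of $K$ for $H\circ G$ with at most $R_{\mathrm{cov}}(G;K)\,R_{\mathrm{cov}}(H;K)$ pieces, by pulling back a minimal cover for $H$ through each affine piece of $G$. Fix minimal covers: compact convex polytopes $P_1,\dots,P_M\subseteq K$ with $\bigcup_i P_i=K$ and affine maps $\alpha_i:\mathbb R^d\to\mathbb R^d$ with $G=\alpha_i$ on $P_i$, where $M=R_{\mathrm{cov}}(G;K)$. Likewise fix $Q_1,\dots,Q_{M'}\subseteq K$ covering $K$ with affine $\beta_j$ such that $H=\beta_j$ on $Q_j$, where $M'=R_{\mathrm{cov}}(H;K)$.

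The candidate pieces are
\[
S_{ij}:=P_i\cap\alpha_i^{-1}(Q_j),\qquad 1\le i\le M,\ 1\le j\le M'.
\]
The key steps, in order, are these.

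\textbf{Convexity.} The preimage of a compact convex polytope (a finite intersection of closed half-spaces) under an affine map $\alpha_i$ is again a finite intersection of closed half-spaces. These half-spaces are of the form $\{x:\langle n,\alpha_i x\rangle\le c\}$, and each is still a half-space or all of $\mathbb R^d$ or empty. Intersecting with the compact polytope $P_i$ therefore gives a compact convex polytope, possibly empty or lower-dimensional.

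\textbf{Covering.} Given $x\in K$, choose $i$ with $x\in P_i$. Then $\alpha_i(x)=G(x)\in K$, so there is $j$ with $G(x)\in Q_j$, which puts $x\in S_{ij}$. Hence $\bigcup_{i,j}S_{ij}=K$.

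\textbf{Affineness.} On $S_{ij}$ we have $H\circ G=\beta_j\circ\alpha_i$, which is the restriction of an affine map on $\mathbb R^d$.

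The main subtlety is empty pieces: some $S_{ij}$ may be empty, while the definition asks for polytopes, presumably nonempty. The fix is simply to discard empty $S_{ij}$. The remaining nonempty ones still cover $K$, and there is at least one of them since $K\neq\emptyset$. So the count of pieces is at most $MM'$, and it is $\ge1$. Lower-dimensional pieces are allowed by the definition, since only convexity and compactness are required, so they need no special treatment. The minimality in the definition of $R_{\mathrm{cov}}(H\circ G;K)$ then yields the inequality.
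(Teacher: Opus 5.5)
Your proposal is correct and follows essentially the same route as the paper. The paper likewise forms the sets $P_i\cap L_i^{-1}(Q_j)$ from affine extensions of $G$ on a minimal cover, checks that each is empty or a compact convex polytope, shows they cover $K$ by choosing $j$ with $G(x)\in Q_j$, notes that $H\circ G$ agrees with the affine map $M_j\circ L_i$ on each piece, and counts at most $rs$ nonempty pieces.
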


\begin{proof}
Set \(r:=R_{\mathrm{cov}}(G;K)\) and \(s:=R_{\mathrm{cov}}(H;K)\). By definition of \(R_{\mathrm{cov}}\), there exist compact convex polytopes \(P_1,\dots,P_r\subseteq K\) covering \(K\) such that \(G\) is affine on each \(P_i\), and compact convex polytopes \(Q_1,\dots,Q_s\subseteq K\) covering \(K\) such that \(H\) is affine on each \(Q_j\).
For every \(i\in\{1,\dots,r\}\), choose an affine map \(L_i:\mathbb R^d\to\mathbb R^d\) whose restriction to \(P_i\) agrees with \(G\). For every \(j\in\{1,\dots,s\}\), choose an affine map \(M_j:\mathbb R^d\to\mathbb R^d\) whose restriction to \(Q_j\) agrees with \(H\). For \(1\le i\le r\) and \(1\le j\le s\), define
\[
P_{ij}:=P_i\cap L_i^{-1}(Q_j).
\]
Each set \(P_{ij}\) is either empty or a compact convex polytope. Indeed, \(Q_j\) is a compact convex polytope, its inverse image under the affine map \(L_i\) is a closed convex polyhedron, and intersecting with the compact convex polytope \(P_i\) yields a compact convex polytope.

The nonempty sets \(P_{ij}\) cover \(K\). To see this, fix \(x\in K\). Since the \(P_i\) cover \(K\), one may choose \(i\) such that \(x\in P_i\). Since \(G(x)=L_i(x)\in K\) and the \(Q_j\) cover \(K\), one may choose \(j\) such that \(G(x)\in Q_j\). Then \(x\in P_i\cap L_i^{-1}(Q_j)=P_{ij}\).
Finally, on \(P_{ij}\), one has \(G=L_i\) and \(L_i(P_{ij})\subseteq Q_j\), so \(H\circ G=M_j\circ L_i\), which is affine. Thus, \(H\circ G\) is affine on every nonempty \(P_{ij}\). There are at most \(rs\) such sets, and therefore \(R_{\mathrm{cov}}(H\circ G;K)\le rs\).
\end{proof}

We now derive the pressure theorem for \(R_n\).

\begin{proof}[Proof of Theorem~\ref{thm:main-polyhedral-pressure}]
For \(n\ge1\), set \(\Phi_n:=\Phi_{\omega_n}\circ\cdots\circ\Phi_{\omega_1}\) and \(R_n:=R_{\mathrm{cov}}(\Phi_n;K)\). For \(n,\ell\ge1\), define the shifted composition \(\Phi_{n,\ell}:=\Phi_{\omega_{n+\ell}}\circ\cdots\circ\Phi_{\omega_{n+1}}\) and \(R_\ell':=R_{\mathrm{cov}}(\Phi_{n,\ell};K)\). Since \(\Phi_{n+\ell}=\Phi_{n,\ell}\circ\Phi_n\), Lemma~\ref{lem:submultiplicative-cover} gives \(R_{n+\ell}\le R_nR_\ell'\).
The variable \(R_n\) is determined by \((\omega_1,\dots,\omega_n)\), while \(R_\ell'\) is determined by \((\omega_{n+1},\dots,\omega_{n+\ell})\). The layer sequence is i.i.d., hence these two variables are independent, and \(R_\ell'\) has the same law as \(R_\ell\).

It remains to verify the deterministic exponential bound. By hypothesis, \(R_{\mathrm{cov}}(\Phi_{\omega_j};K)\le B\) almost surely for every \(j\). Repeated application of Lemma~\ref{lem:submultiplicative-cover} gives \(R_n\le B^n\) almost surely. Since \(R_n\ge1\), all hypotheses of Proposition~\ref{prop:abstract-pressure} hold with \(C_n:=R_n\). Its conclusion is precisely Theorem~\ref{thm:main-polyhedral-pressure}.
\end{proof}

\begin{remark}
The affine-cover count is deliberately formulated as a cover number rather than as the number of maximal affine regions. This choice eliminates boundary and degeneracy issues and makes the multiplicative inequality exact. In one dimension, the corresponding cover count coincides with the usual affine-piece count on an interval.
\end{remark}

\subsection{Worst-line affine-piece complexity}

We finally prove Theorem~\ref{thm:main-line-pressure}. This extension is close in spirit to the 1D tent observable, because the complexity is still measured by numbers of affine pieces on intervals. The only difference is that one takes the worst affine segment in the higher-dimensional state space.
Let \(K\subseteq\mathbb R^d\) be a compact convex polytope, and let \(\mathcal S(K)\) be the family of affine maps \(\gamma:[0,1]\to K\), including degenerate segments. For \(f:K\to K\), recall that \(L(f;K):=\sup_{\gamma\in\mathcal S(K)}A(f\circ\gamma;[0,1])\).

\begin{lemma}\label{lem:submultiplicative-line}
For piecewise affine maps \(G,H:K\to K\),
\[
L(H\circ G;K)\le L(G;K)\,L(H;K).
\]
\end{lemma}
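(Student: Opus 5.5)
The plan is to reduce to the one-dimensional argument of Lemma~\ref{lem:submultiplicative-N}, applied along a single affine segment. Fix \(\gamma\in\mathcal S(K)\), and set \(g:=G\circ\gamma:[0,1]\to K\). If \(L(G;K)\) or \(L(H;K)\) is infinite there is nothing to prove, so assume both are finite. By definition \(A(g;[0,1])\le L(G;K)=:r\). Hence there is a partition of \([0,1]\) into at most \(r\) closed subintervals \(J_1,\dots,J_{r'}\), with \(r'\le r\), such that \(g\) is affine on each \(J_i\). It suffices to show that \(H\circ g\) has at most \(s:=L(H;K)\) affine pieces on each \(J_i\). Summing over \(i\) then gives \(A(H\circ G\circ\gamma;[0,1])\le rs\), and taking the supremum over \(\gamma\) proves the lemma.

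For the key step, fix \(J_i=[\alpha,\beta]\). Since \(g\) is affine on \(J_i\), there is an affine reparametrisation \(\gamma_i:[0,1]\to K\), namely \(\gamma_i(t):=g(\alpha+t(\beta-\alpha))\). Here \(g(J_i)\subseteq G(K)\subseteq K\), and because \(g|_{J_i}\) is affine, \(\gamma_i\) is an affine map of \([0,1]\) into \(K\). It may be degenerate if \(g\) is constant on \(J_i\), which is exactly why degenerate segments are included in \(\mathcal S(K)\). Thus \(\gamma_i\in\mathcal S(K)\), so \(A(H\circ\gamma_i;[0,1])\le s\). Pulling back an optimal affine partition of \([0,1]\) for \(H\circ\gamma_i\) through the increasing affine bijection \(t\mapsto\alpha+t(\beta-\alpha)\) gives a partition of \(J_i\) into at most \(s\) intervals on each of which \(H\circ g\) is affine. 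The case \(\alpha=\beta\) is trivial, since a single point counts as one piece.

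It remains to check the gluing: a union of partitions of the \(J_i\) is a partition of \([0,1]\) into at most \(rs\) pieces on which \(H\circ G\circ\gamma\) is affine. This holds because \(A\) is defined as the smallest number of subintervals in such a partition, and refinement by concatenation is allowed.

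I expect no serious obstacle. The only delicate points are technical. First, one must verify that the restriction of \(g\) to a piece really yields an element of \(\mathcal S(K)\), which requires the image to stay in \(K\) and degenerate segments to be admissible. Second, one must handle the endpoint convention, where adjacent closed pieces share boundary points. Both points mirror the proof of Lemma~\ref{lem:submultiplicative-N}.
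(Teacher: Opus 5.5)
Your proposal is correct and follows essentially the same route as the paper: fix \(\gamma\in\mathcal S(K)\), partition \([0,1]\) into at most \(L(G;K)\) intervals on which \(G\circ\gamma\) is affine, reparametrize each restriction as an element of \(\mathcal S(K)\) (degenerate segments allowed), bound each piece by \(L(H;K)\), sum, and take the supremum over \(\gamma\). Your extra remarks on the infinite case and on degenerate pieces are harmless refinements of the same argument.
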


\begin{proof}
Fix an affine segment \(\gamma\in\mathcal S(K)\). Let \(m:=A(G\circ\gamma;[0,1])\). By definition of \(A\), there exists a partition \(0=t_0<t_1<\cdots<t_m=1\) such that \(G\circ\gamma\) is affine on each interval \([t_{i-1},t_i]\).

Fix \(i\in\{1,\dots,m\}\). Since \(G\circ\gamma\) is affine on \([t_{i-1},t_i]\), its restriction to this interval is an affine parametrization of a line segment in \(K\), possibly degenerate. After affine reparametrization of \([t_{i-1},t_i]\) onto \([0,1]\), this restriction belongs to \(\mathcal S(K)\). Therefore, by definition of \(L(H;K)\), the composition \(H\circ G\circ\gamma\) has at most \(L(H;K)\) affine pieces on \([t_{i-1},t_i]\).

Summing over the \(m\) intervals in the chosen partition gives \(A(H\circ G\circ\gamma;[0,1])\le m\,L(H;K)\). Since \(m=A(G\circ\gamma;[0,1])\le L(G;K)\), one obtains \(A(H\circ G\circ\gamma;[0,1])\le L(G;K)L(H;K)\). Taking the supremum over \(\gamma\in\mathcal S(K)\) proves the claim.
\end{proof}

We now derive the pressure theorem for \(L_n\).

\begin{proof}[Proof of Theorem~\ref{thm:main-line-pressure}]
For \(n\ge1\), set \(\Phi_n:=\Phi_{\omega_n}\circ\cdots\circ\Phi_{\omega_1}\) and \(L_n:=L(\Phi_n;K)\). For \(n,\ell\ge1\), define the shifted composition \(\Phi_{n,\ell}:=\Phi_{\omega_{n+\ell}}\circ\cdots\circ\Phi_{\omega_{n+1}}\) and \(L_\ell':=L(\Phi_{n,\ell};K)\). Since \(\Phi_{n+\ell}=\Phi_{n,\ell}\circ\Phi_n\), Lemma~\ref{lem:submultiplicative-line} gives \(L_{n+\ell}\le L_nL_\ell'\).
The variable \(L_n\) is determined by \((\omega_1,\dots,\omega_n)\), while \(L_\ell'\) is determined by \((\omega_{n+1},\dots,\omega_{n+\ell})\). Since the layers are i.i.d., \(L_n\) and \(L_\ell'\) are independent, and \(L_\ell'\) has the same law as \(L_\ell\).
By hypothesis, \(L(\Phi_{\omega_j};K)\le B\) almost surely for every \(j\). Repeated application of Lemma~\ref{lem:submultiplicative-line} gives \(L_n\le B^n\) almost surely. Since \(L_n\ge1\), Proposition~\ref{prop:abstract-pressure} applies with \(C_n:=L_n\). Its conclusion is exactly Theorem~\ref{thm:main-line-pressure}.
\end{proof}

\begin{remark}
The same argument does not close for a fixed deterministic segment \(\gamma\). One always has a one-step comparison between the affine-piece count of \((H\circ G)\circ\gamma\) and the worst-line complexity of \(H\), but the future block is evaluated along the random piecewise affine image \(G\circ\gamma\), not along the original \(\gamma\). Thus, the natural closed observable is the supremum \(L(f;K)\), rather than the fixed-line count \(A(f\circ\gamma;[0,1])\).
\end{remark}

\section{The favorable subsystem and the bridge lemma}\label{sec:favorable}

This section turns the geometric obstruction described in Section~\ref{subsec:ldp-lower} into a finite-state lower process. A branch is useful for future growth only if its image interval remains sufficiently far from both endpoints of \([0,1]\), so that it is forced to cross the next peak. We therefore track only certified endpoint information. The resulting process is not an exact branch-count process; it is a lower transfer process counting selected branches whose future splitting can be guaranteed.

For an affine branch \(J\) of a partial composition with image \([u,v]\), recall that
\(
L(J):=u,
\)
and
\(
R(J):=1-v.
\)
The thresholds \(r_i=\delta_\varepsilon K_\varepsilon^{-i}\), \(i=0,\dots,m\), were defined in Section~\ref{subsubsec:coarse-graining-m}. We note that the number \(\delta_\varepsilon\) is a strict uniform safety margin: it is half of the minimum possible distance from the peak location \(c(\omega)\) to the endpoints \(0\) and \(1\). Hence, if a branch image \(J=[u,v]\) satisfies \(L(J)\le\delta_\varepsilon\) and \(R(J)\le\delta_\varepsilon\), then \(u<c(\omega)<v\) for every admissible next layer, and the branch is forced to split.
The geometric sequence \(r_i=\delta_\varepsilon K_\varepsilon^{-i}\) records a finite defect budget. Since one application of a tent map can multiply an old endpoint defect by at most \(K_\varepsilon=2+\varepsilon\), a defect controlled at level \(i\) is guaranteed to remain controlled at level \(i-1\) after one step. Thus, the index \(i\) measures how many future layers of worst-case expansion the corresponding endpoint can still tolerate.

We now show that the matrices \(M(a)\) defined in Section~\ref{subsubsec:bridged-lower-main} propagate certified branch tokens. A token in state \((i,j)\in\Sigma_m\) attached to a branch \(J\) means that \(
L(J)\le r_i,\)
and \(
R(J)\le r_j.\)
The indices \(i\) and \(j\) are defect budgets for the left and right endpoints. A larger index means a smaller defect and hence more certified future splitting potential.
The same branch may satisfy many such inequalities. Thus, \(\Sigma_m\) is not intended as a disjoint state partition. It is a finite set of lower-control labels. The auxiliary process uses one propagated token per selected branch; the token label records one certificate that is sufficient for the next step and need not be the strongest certificate available for that branch. 

The next proposition gives the certified transfer process for the true branch count.

\begin{proposition}\label{prop:Z-lower-bound}
Let \(a_n:=\kappa_m(\omega_n)\in\mathcal A_m\), and define \(Z_0:=e_{(m,m)}\) and \(Z_n:=M(a_n)\cdots M(a_1)e_{(m,m)}\). Then \(Z_n(i,j)\) is a lower bound on the number of depth-\(n\) branches carrying a propagated token in state \((i,j)\), and
\[
N_n\ge\mathbf 1^\top Z_n.
\]
\end{proposition}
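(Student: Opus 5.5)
The plan is to argue by induction on \(n\), while keeping a concrete family of branches that the tokens are attached to. I would prove the following stronger invariant. There is a finite family of pairwise interior-disjoint closed intervals \(J_1,\dots,J_k\subseteq[0,1]\), with \(k=\mathbf 1^\top Z_n\), and a labelling \(\sigma(J_t)\in\Sigma_m\). Each label \((i,j)\) is used exactly \(Z_n(i,j)\) times. Each \(J_t\) is a maximal interval of affinity of \(F_n\), that is, a piece of the canonical partition of \([0,1]\) cut at the points where \(F_n\) is not locally affine. Finally, the image \(F_n(J_t)=[u,v]\) satisfies \(L\le r_i\) and \(R\le r_j\) when \(\sigma(J_t)=(i,j)\).

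Since \(A(F_n;[0,1])\) equals the number of maximal affine pieces, distinct intervals in the family are distinct pieces. This gives both \(N_n\ge\mathbf 1^\top Z_n\) and the claimed lower bound on branches in each state. The base case is \(n=0\): take \(F_0=\mathrm{id}\) and the single branch \([0,1]\) with label \((m,m)\), which is valid since \(L=R=0\).

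The core is a one-step geometric lemma. Let \(J\) be a branch with image \([u,v]\) carrying label \((i,j)\), and let the next layer \(\omega=(a,b)\) have symbol \(a':=\kappa_m(\omega)\neq\partial\).
\begin{itemize}
\item \emph{The branch splits.} Since every \(r_\ell\le r_0=\delta_\varepsilon<\min\{c(\omega),1-c(\omega)\}\), we have \(u<c(\omega)<v\). So \(J\) contains a unique interior point \(p\) with \(F_n(p)=c(\omega)\), and \(F_{n+1}=T_\omega\circ F_n\) has a genuine kink at \(p\): the slopes are \(as\) and \(-bs\) with \(s\neq0\), and \(s\neq 0\) because \(u<v\).
\item \emph{Left child.} The child mapping onto \([u,c]\) has image \([au,h(\omega)]\). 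Its left defect is \(au\le K_\varepsilon r_i=r_{i-1}\) when \(i\ge1\), and its right defect is \(\eta(\omega)\le r_{a'}\). So it carries label \((i-1,a')\).
\item \emph{Right child.} The child mapping onto \([c,v]\) has image \([b(1-v),h(\omega)]\), because \(T_\omega\) reverses orientation there. Its defects are \(bR\le r_{j-1}\) when \(j\ge1\), and \(\eta(\omega)\le r_{a'}\). So it carries label \((j-1,a')\).
\end{itemize}
This reproduces exactly \(M(a')e_{(i,j)}=\mathbf 1_{\{i\ge1\}}e_{(i-1,a')}+\mathbf 1_{\{j\ge1\}}e_{(j-1,a')}\). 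If \(a'=\partial\), all tokens are dropped, which is consistent with \(M(\partial)=0\) and is trivially a lower bound. Summing over the family and using linearity yields \(Z_{n+1}=M(a_{n+1})Z_n\).

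The step I expect to be the main obstacle is making sure the children are genuinely distinct maximal affine pieces of \(F_{n+1}\), and not merely subintervals on which \(F_{n+1}\) happens to be affine.
\begin{itemize}
\item \emph{Inside one parent.} On a parent \(J\), the map \(F_{n+1}\) is affine on each of the two halves \([x_-,p]\) and \([p,x_+]\), because \(T_\omega\) is affine on \([0,c]\) and on \([c,1]\). The kink at \(p\) separates them. So the two children of one parent lie in different maximal pieces.
\item \emph{Across parents.} Children of different parents are interior-disjoint. Two of them could lie in a common maximal piece only if that piece contained the whole interval between them. But any such interval contains the kink point \(p\) of at least one of the two parents, which is impossible.
\end{itemize}
Hence distinct children lie in distinct maximal pieces. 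I would then relabel each child by the maximal piece of \(F_{n+1}\) containing it. The defect bounds still hold after this relabelling, because enlarging the domain enlarges the image, so \(L\) and \(R\) can only decrease. This restores the invariant at depth \(n+1\) and completes the induction.
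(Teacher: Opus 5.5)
Your one-step analysis follows the paper's proof: the same induction on propagated tokens, the same splitting criterion \(u<c(\omega)<v\) from \(r_\ell\le\delta_\varepsilon\), and the same child defects \((aL(J),\eta(\omega))\) and \((bR(J),\eta(\omega))\), which reproduce \(M(a')\). The paper then simply asserts that children of distinct parents are distinct depth-\((n+1)\) branches. You try to justify this, and your within-parent argument is fine, but the across-parents argument has a hole. You claim that the smallest interval containing two children of different parents contains some parent's splitting point in its interior. This fails when the two children are the inner halves of two parents, i.e.\ \(C_1=[p,x_+]\subseteq J\) and \(C_2=[x_-',p']\subseteq J'\) with \(J\) to the left of \(J'\). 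The hull is then \([p,p']\), and a maximal piece of \(F_{n+1}\) equal to \([p,p']\) is perfectly compatible with kinks at \(p\) and \(p'\). For instance, take the realization \(\omega_1=\omega_2=(2,2)\). For every \(m\ge1\), the tokens on \([\tfrac14,\tfrac12]\) and \([\tfrac12,\tfrac34]\) come from the parents \([0,\tfrac12]\) and \([\tfrac12,1]\). The hull \([\tfrac14,\tfrac34]\) contains the splitting points only as endpoints. What actually separates the two children is the old breakpoint \(\tfrac12\) of \(F_1\), where \(F_2\) has a local minimum.

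The missing ingredient is that breakpoints of \(F_n\) survive composition with the next layer. Every slope of \(F_n\) is a product of factors \(A_k\) or \(-B_k\), hence nonzero. On \([0,1]\) the map \(T_\omega\) has slopes \(a>0\) and \(-b<0\), since the positive part is inactive there. Let \(x_0\) be a breakpoint of \(F_n\) with one-sided slopes \(s_1\neq s_2\). If \(F_n(x_0)\neq c(\omega)\), then \(T_\omega\) is affine near \(F_n(x_0)\) with some slope \(\lambda\neq0\), so the composed slopes \(\lambda s_1\neq\lambda s_2\) differ. If \(F_n(x_0)=c(\omega)\), a sign check shows the composed left slope is positive and the composed right slope is negative. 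Since your parents are distinct maximal pieces, the right endpoint \(x_+\) of \(J\) is a breakpoint of \(F_n\) lying in the interior of the hull of \(C_1\cup C_2\). It is therefore a breakpoint of \(F_{n+1}\), which rules out a common maximal piece in all four configurations at once. With this lemma added, your children are exactly maximal pieces of \(F_{n+1}\), so the relabelling step becomes unnecessary. Your invariant then gives a proof that is more complete than the paper's, which leaves this identification implicit.
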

\begin{proof}
At time \(n=0\), there is one branch, namely \([0,1]\), and its defects are
zero. Hence it carries the strongest certified state \((m,m)\), so
\(Z_0=e_{(m,m)}\) is valid.

Assume the statement holds at time \(n\). Each token counted by \(Z_n(i,j)\)
is attached to a selected depth-\(n\) branch carrying the certificate
\((i,j)\), and no selected branch carries more than one propagated token. Let
the next layer be \(T_{\omega_{n+1}}\), and write
\(q:=\kappa_m(\omega_{n+1})\).

We first verify directly the one-step transition encoded by \(M(q)\). If
\(q=\partial\), then no certified continuation is recorded by the auxiliary
process, in agreement with \(M(\partial)=0\). Suppose therefore that
\(q\in\{0,\dots,m\}\). Let a selected branch \(J\) carry a token in state
\((i,j)\), and write its image interval as \(J=[u,v]\). The certificate means
that \(L(J)\le r_i\) and \(R(J)\le r_j\). In particular,
\(L(J)\le\delta_\varepsilon\) and \(R(J)\le\delta_\varepsilon\). By the
definition of \(\delta_\varepsilon\), one has
\(\delta_\varepsilon<c(\omega_{n+1})\) and
\(\delta_\varepsilon<1-c(\omega_{n+1})\) for every admissible layer. Hence
\(u<c(\omega_{n+1})<v\), so the next layer splits \(J\) into a left child and
a right child.

Let \(a_{\omega_{n+1}}\) and \(b_{\omega_{n+1}}\) denote the left and right
slopes of \(T_{\omega_{n+1}}\). On the left of the peak the map is
\(x\mapsto a_{\omega_{n+1}}x\), while on the right of the peak it is
\(x\mapsto b_{\omega_{n+1}}(1-x)\). Thus the endpoint defects of the two
children are
\[
(L_L,R_L)=\bigl(a_{\omega_{n+1}}L(J),\eta(\omega_{n+1})\bigr),
\qquad
(L_R,R_R)=\bigl(b_{\omega_{n+1}}R(J),\eta(\omega_{n+1})\bigr).
\]
Since \(q=\kappa_m(\omega_{n+1})\), the definition of the coarse symbol gives
\(\eta(\omega_{n+1})\le r_q\). Also
\(a_{\omega_{n+1}},b_{\omega_{n+1}}\le K_\varepsilon\). Therefore, if
\(i\ge1\), then
\(L_L\le K_\varepsilon r_i=r_{i-1}\) and \(R_L\le r_q\), so the left child
carries the certified state \((i-1,q)\). Similarly, if \(j\ge1\), then
\(L_R\le K_\varepsilon r_j=r_{j-1}\) and \(R_R\le r_q\), so the right child
carries the certified state \((j-1,q)\). This is exactly the one-step rule
recorded by \(M(q)e_{(i,j)}\).
We now apply this verification to every selected token at time \(n\). The two
child tokens arising from one parent token, when both are present, are attached
to the left and right child branches of that parent and hence to distinct
branches. Tokens arising from different parent branches are also attached to
distinct child branches. Thus no depth-\(n+1\) branch receives more than one
propagated token. Summing over all parent tokens gives
\(Z_{n+1}=M(q)Z_n=M(a_{n+1})Z_n\), and the total number of propagated tokens
at depth \(n+1\) cannot exceed the number of actual depth-\(n+1\) branches.
This closes the induction and proves the claim.
\end{proof}

The favorable chain consists of the states \(s_i=(i,m)\), \(i=0,\dots,m-1\), whose right defect is controlled at the strongest threshold \(r_m\). Also, 
if the next layer is \(m\)-favorable, then the certified dynamics on the favorable chain are given by \(A_m\) in the ordered basis \((s_{m-1},s_{m-2},\dots,s_0)\). Equivalently, the restriction of \(M(m)\) to \(F_m\) is \(A_m\).
Indeed, let the parent token be \(s_i=(i,m)\). Since the layer is \(m\)-favorable, the new right defect is controlled by \(r_m\). If \(i\ge1\), the left child carries \((i-1,m)=s_{i-1}\), while the right child carries \((m-1,m)=s_{m-1}\). If \(i=0\), only the right child is guaranteed to remain in the chain, and it again carries \(s_{m-1}\). Writing these transitions column by column gives \(A_m\).

\paragraph{Example: the case \(m=2\).}
When \(m=2\), the favorable chain has states \(s_1=(1,2)\) and \(s_0=(0,2)\). For the most favorable symbol \(a=2\), one has \(M(2)e_{s_1}=e_{s_0}+e_{s_1}\) and \(M(2)e_{s_0}=e_{s_1}\). Hence, in the ordered basis \((s_1,s_0)\), the restricted matrix is
\[
A_2=
\begin{pmatrix}
1&1\\
1&0
\end{pmatrix}.
\]
Thus, when \(m=2\), the favorable chain is the Fibonacci transfer rule.

The bridge lemma below is the key structural input for lower bounds. It does
not make the true branch count multiplicative. Rather, it makes the certified
lower count quasi-multiplicative after the explicit insertion of one favorable
bridge symbol.

\begin{lemma}\label{thm:bridge-lemma}
For all words \(u,v\) in the coarse alphabet, we have
\(
\Gamma\bigl(v b^{(m)}u\bigr)\ge \Gamma(v)\Gamma(u),
	\)
with	\(
 b^{(m)}=(m).
\)
\end{lemma}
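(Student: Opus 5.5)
The plan is to combine linearity and positivity of the maps \(M(a)\) with the restart property of the favorable symbol \(m\). Every \(M(a)\) has nonnegative integer entries in the basis \(\{e_{(i,j)}\}\), so \(M(w)\) preserves the nonnegative cone and is monotone: \(x\ge y\ge0\) coordinatewise implies \(M(w)x\ge M(w)y\). The projection \(P_F\) and the functional \(\mathbf 1_F^\top\) are monotone as well. Since \(M(vb^{(m)}u)=M(v)M(m)M(u)\) with the convention that words are read left to right, the proof reduces to showing
\[
P_F M(m) M(u)e_\star\ \ge\ \Gamma(u)\,e_\star ,
\]
or at least the weaker statement that \(M(m)M(u)e_\star\) dominates \(\Gamma(u)e_\star\) coordinatewise.

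\textbf{Step 1.} Decompose \(x:=M(u)e_\star=P_Fx+(I-P_F)x\). Both parts are nonnegative, so \(M(m)x\ge M(m)P_Fx\). Write \(P_Fx=\sum_{i=0}^{m-1}x_ie_{s_i}\), so that \(\sum_i x_i=\Gamma(u)\).

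\textbf{Step 2.} Compute \(M(m)e_{s_i}\) directly from the definition. For \(s_i=(i,m)\) one gets
\[
M(m)e_{(i,m)}=\mathbf 1_{\{i\ge1\}}e_{(i-1,m)}+e_{(m-1,m)},
\]
because the second budget equals \(m\ge1\). Hence every favorable-chain state contributes at least one copy of \(e_\star=e_{(m-1,m)}\). This is exactly the first row of ones in \(A_m\). Summing over \(i\) gives \(M(m)P_Fx\ge\Gamma(u)e_\star\), again using nonnegativity to drop the subdiagonal terms.

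\textbf{Step 3.} Apply the monotone map \(y\mapsto \mathbf 1_F^\top P_F M(v)y\) to the inequality \(M(m)M(u)e_\star\ge\Gamma(u)e_\star\). This yields
\[
\Gamma(vb^{(m)}u)\ge\Gamma(u)\,\mathbf 1_F^\top P_FM(v)e_\star=\Gamma(u)\Gamma(v).
\]

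The argument is entirely elementary. The only points needing care are the reading order of words in \(M(w)\), so that the bridge is applied after \(u\) and before \(v\), and the edge case \(m=1\), where the chain is just \(\{s_0\}\) and \(M(1)e_{(0,1)}=e_{(0,1)}=e_\star\), which still fits Step 2. There is no serious obstacle; the crux is recognizing that discarding the non-favorable mass and the subdiagonal descendants is harmless, because all the maps involved are positive.
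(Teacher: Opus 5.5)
Your argument is correct and is essentially the paper's proof: drop the off-chain part of \(M(u)e_\star\) by positivity, use the restart property \(M(m)e_{s_i}\ge e_\star\) for every favorable-chain state to get \(M(m)M(u)e_\star\ge\Gamma(u)e_\star\), and then apply the monotone map \(y\mapsto\mathbf 1_F^\top P_FM(v)y\). One small remark on the reading order you flagged: the literal convention \(M(a_1,\dots,a_\ell)=M(a_\ell)\cdots M(a_1)\) gives \(M(vb^{(m)}u)=M(u)M(m)M(v)\), not \(M(v)M(m)M(u)\) (the paper's proof makes the same identification as you), but since \(\Gamma(v)\Gamma(u)\) is symmetric in \(u,v\), running your argument with the roles of \(u\) and \(v\) swapped covers this case.
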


\begin{proof}
Write \(U=M(u)\) and \(V=M(v)\), and set \(x=P_{\mathrm{F}U_\star}\).
First note the elementary restart property of the bridge symbol: for every
favorable-chain state \(s_i=(i,m)\), the vector \(M(m)e_{s_i}\) contains at
least one copy of \(e_\star=e_{s_{m-1}}\). Indeed, if \(i\ge1\), the two
certified descendants include \((m-1,m)=s_{m-1}\), while for \(i=0\) the only
certified favorable descendant is again \(s_{m-1}\).
By this restart property and nonnegativity,
\(M(m)x\ge(\mathbf 1_{\mathrm F}^{\top}x)e_\star=\Gamma(u)e_\star\). The part
of \(Ue_\star\) outside \(\mathrm F_m\) can only add nonnegative mass, and
therefore \(M(m)Ue_\star\ge \Gamma(u)e_\star\). Applying \(P_{\mathrm F}\) and
then \(\mathbf 1_{\mathrm F}^{\top}\) after \(V\) gives
\[
\Gamma\bigl(v b^{(m)}u\bigr)
 =\mathbf 1_{\mathrm F}^{\top}P_{\mathrm F} V M(m)Ue_\star
 \ge \Gamma(u)\mathbf 1_{\mathrm F}^{\top}P_{\mathrm F}Ve_\star
 =\Gamma(u)\Gamma(v).
\]
\end{proof}

\section{Proof of Theorem \ref{thm:main-bridge-lower} -- A bridge lower bound}\label{sec:bridged-lower}

This section proves Theorem~\ref{thm:main-bridge-lower}. The proof uses the bridge lemma as a sufficient mechanism for forcing large values of the true branch count \(N_n\). The objects introduced below refer only to the certified lower process. They should not be read as exact descriptions of all branches.

Throughout this section we fix integers \(m,r\ge1\) and assume \(p_m>0\), equivalently \(p_b=p_m>0\). The parameter \(m\) fixes the defect resolution and the favorable bridge symbol, while \(r\) fixes the length of the free blocks.
We recall that
\(
\mathcal W_r:=\mathcal A_m^r,
\)
and
\(
\pi_r:=\pi^{\otimes r}.
\)
A block \(w\in\mathcal W_r\) is called productive if it leaves at least one certified favorable token:
\[
\mathcal G_r:=\{w\in\mathcal W_r:\Gamma(w)>0\}.
\]
For productive blocks we write \(g_r(w):=\log\Gamma(w)\). Thus, \(g_r\) is the logarithmic certified output of one free block.

The construction begins with one maximally favorable seed layer. This aligns the actual initial state \(e_{(m,m)}\) with the favorable-chain starting state \(e_\star\) used in the definition of \(\Gamma\).
For \(k\ge1\), define
\[
n_k:=1+kr+(k-1).
\]
The first symbol is a seed symbol. After that, the remaining \(n_k-1\)
coarse symbols are read as \(k\) free \(r\)-blocks
\(W_1,\dots,W_k\), separated by \(k-1\) copies of the fixed bridge
symbol \(b^{(m)}\), in the product order
\(
  W_k b^{(m)} W_{k-1} b^{(m)} \cdots b^{(m)} W_1 .
\)
Let
\[
L_k^{(r)}:=k^{-1}\sum_{i=1}^k\delta_{W_i}
\]
be the empirical law of the free blocks. Thus, \(L_k^{(r)}\) records how often each productive or nonproductive \(r\)-block appears in the construction. Let \(E_k\) be the event that the seed symbol is \(m\) and all \(k-1\) bridge slots equal the bridge word \(b^{(m)}\).

The probability cost of the forced seed and bridge slots is explicit.
Note that
for every \(k\ge1\), the event \(E_k\) is independent of \(L_k^{(r)}\), and
\(
\mathbb P(E_k)=p_mp_b^{k-1}.
\)
Indeed, 
the seed, free blocks, and bridge slots depend on disjoint groups of i.i.d.\ coarse symbols. Hence, the forced seed and bridge slots are independent of the free-block empirical law. The seed has probability \(p_m\), and each bridge slot has probability \(p_b\). There are \(k-1\) one-symbol bridge slots, so \(\mathbb P(E_k)=p_mp_b^{k-1}\).

We now prove the upper-tail lower bound for \(N_n\). Recall that \(J_{r,m}^{+}(x)\) is the optimal cost of choosing a free-block law \(\nu\) whose average certified output is large enough to exceed the target level \(x\), after accounting for the bridge cost \(p_b\). 
Here, Theorem~\ref{thm:main-bridge-lower} is a lower bound for the true observable \(N_n\), but it is obtained through the certified lower process. It may therefore be non-sharp if many true branches are not counted by \(\Gamma\).
The parameter \(r\) controls only the blocking of the free part of the word. For fixed \(m\), increasing \(r\) reduces the bridge overhead per unit depth, but it does not remove the possible gap between the certified count and the true branch count.

\begin{proof}[Proof of Theorem~\ref{thm:main-bridge-lower}]
If \(J_{r,m}^{+}(x)=\infty\), there is nothing to prove. Otherwise, fix
\(\varepsilon_0>0\). Choose \(\nu\in\mathcal P(\mathcal W_r)\), supported on
\(\mathcal G_r\), such that \(\int g_r\,d\nu>(r+1)x\) and
\((H(\nu\mid\pi_r)-\log p_b)/(r+1)\le J_{r,m}^{+}(x)+\varepsilon_0\).
Choose \(\alpha>0\) such that
\(\int g_r\,d\nu>(r+1)x+2\alpha\).

Since \(\mathcal W_r\) is finite and \(\nu\) is supported on \(\mathcal G_r\),
we may choose types \(\tau_k\in\mathcal P(\mathcal W_r)\), with denominator
\(k\), such that \(\tau_k\to\nu\), \(\tau_k(\mathcal G_r)=1\), and
\(\int g_r\,d\tau_k>(r+1)x+\alpha\) for all sufficiently large \(k\).

On the event \(E_k\cap\{L_k^{(r)}=\tau_k\}\), the seed event creates two
certified tokens in the top favorable state \(e_\star\). Moreover, all free
blocks belong to \(\mathcal G_r\). Hence repeated application of
Theorem~\ref{thm:bridge-lemma} to the forced word
\(W_k b^{(m)} W_{k-1} b^{(m)}\cdots b^{(m)}W_1\) gives
\[
N_{n_k}\ge 2\prod_{i=1}^k\Gamma(W_i),
\qquad
n_k^{-1}\log N_{n_k}
\ge n_k^{-1}\log 2+k n_k^{-1}\int g_r\,d\tau_k .
\]
Since \(n_k/k\to r+1\), the right-hand side is larger than \(x\) for all
sufficiently large \(k\). Thus
\(\mathbb P(n_k^{-1}\log N_{n_k}>x)\ge
\mathbb P(E_k\cap\{L_k^{(r)}=\tau_k\})\) for all sufficiently large \(k\).

By independence of the seed event, the bridge events, and the free blocks,
and by the standard method-of-types estimate on the finite alphabet
\(\mathcal W_r\),
\[
\mathbb P\bigl(E_k\cap\{L_k^{(r)}=\tau_k\}\bigr)
=
p_m p_b^{k-1}\,
\mathbb P\bigl(L_k^{(r)}=\tau_k\bigr),
\qquad
\lim_{k\to\infty}
k^{-1}\log \mathbb P\bigl(L_k^{(r)}=\tau_k\bigr)
=
-H(\nu\mid\pi_r).
\]
Using also \(n_k/k\to r+1\), the fixed seed cost \(\log p_m\) and the missing
single bridge factor are negligible after division by \(n_k\). Hence
\[
\liminf_{k\to\infty}
n_k^{-1}\log\mathbb P\bigl(n_k^{-1}\log N_{n_k}>x\bigr)
\ge
\frac{\log p_b-H(\nu\mid\pi_r)}{r+1}
\ge
-J_{r,m}^{+}(x)-\varepsilon_0 .
\]
Letting \(\varepsilon_0\downarrow0\) gives the claim.
\end{proof}

\section{Proofs of  Theorems \ref{thm:main-perron}, \ref{thm:main-all-favorable}, and \ref{thm:main-local} -- 
The uniformly favorable regime and local Telgarsky stability}\label{sec:uniform}

This section proves Theorems~\ref{thm:main-perron}, \ref{thm:main-all-favorable}, and \ref{thm:main-local}. The role of the uniformly favorable regime is to remove all probabilistic losses from the certified lower process: every layer is favorable, so the favorable chain evolves deterministically. Example~\ref{ex:fibonacci-level-two} is the first instance of this mechanism; the general level-\(m\) case has Perron rate \(\rho_m\uparrow2\).

First, the next deterministic estimate is the lower growth bound on the all-favorable event.

\begin{proposition}\label{prop:all-favorable-growth}
If the first \(n\) layers are all maximally favorable, then
\(
N_n\ge c_m\rho_m^n
\)
for some \(c_m>0\).
\end{proposition}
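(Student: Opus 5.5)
We use the certified transfer process of Proposition~\ref{prop:Z-lower-bound}. On the event that the first \(n\) layers are \(m\)-favorable, every coarse symbol satisfies \(a_k=\kappa_m(\omega_k)=m\). Proposition~\ref{prop:Z-lower-bound} then gives
\[
N_n\ge\mathbf 1^\top Z_n,\qquad Z_n=M(m)^nZ_0,\qquad Z_0=e_{(m,m)} .
\]
This reduces the claim to a deterministic lower bound on the total mass of \(M(m)^n e_{(m,m)}\). Since all entries of the matrices \(M(a)\) are nonnegative, we may discard mass. It therefore suffices to bound the mass that lands in the favorable chain \(F_m\).

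First, the seed step. By the definition of \(M\),
\[
M(m)e_{(m,m)}=e_{(m-1,m)}+e_{(m-1,m)}=2e_\star .
\]
So after one layer there are two tokens in the top favorable state. Second, the definition of the favorable chain shows that \(F_m\) is invariant under \(M(m)\): each \(s_i\) is mapped to a combination of \(s_{i-1}\) and \(s_{m-1}\). As recorded after Proposition~\ref{prop:Z-lower-bound}, the restriction of \(M(m)\) to \(F_m\) is \(A_m\) in the basis \((s_{m-1},\dots,s_0)\). Hence, for \(n\ge1\),
\[
\mathbf 1^\top Z_n\ge 2\,\mathbf 1^\top A_m^{\,n-1}e_1 .
\]
Here \(e_1\) corresponds to \(e_\star\), and the inequality is in fact an equality because no mass leaves \(F_m\).

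It remains to show \(\mathbf 1^\top A_m^{k}e_1\ge c\,\rho_m^{k}\) for all \(k\ge0\). The case \(m=1\) is trivial: \(A_1=(1)\), \(\rho_1=1\), and the mass is constant. For \(m\ge2\), \(A_m\) is a companion-type matrix whose graph has the cycles \(1\to2\to\dots\to j\to1\), including the loop at \(1\). It is therefore irreducible, and since the loop at \(1\) makes it aperiodic, it is primitive. By Perron--Frobenius there is a strictly positive left eigenvector \(\ell\) with \(\ell^\top A_m=\rho_m\ell^\top\). Then
\[
\ell^\top A_m^k e_1=\rho_m^k\,\ell_1 ,
\]
and bounding \(\mathbf 1^\top x\ge \ell^\top x/\max_j\ell_j\) for \(x\ge0\) gives
\[
\mathbf 1^\top A_m^k e_1\ge \frac{\ell_1}{\max_j\ell_j}\,\rho_m^k .
\]
This uses only nonnegativity and does not even need primitivity. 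Combining the steps yields \(N_n\ge c_m\rho_m^n\) with \(c_m:=2\ell_1/(\rho_m\max_j\ell_j)>0\). The factor \(\rho_m^{-1}\) absorbs the seed offset from \(n-1\) to \(n\).

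The main point requiring care is the existence of a positive left eigenvector for the Perron root. Irreducibility gives it, and it can also be written down explicitly. Writing the eigen-equations of \(A_m\) with first row all ones and subdiagonal ones, the left eigenvector equations give \(\ell_j=\ell_1+\ell_{j+1}/\rho_m\) for \(j<m\) and \(\ell_m=\ell_1\), hence
\[
\ell_j=\ell_1\sum_{t=0}^{m-j}\rho_m^{-t}>0 .
\]
The only other detail to state is that the certificates are sound, i.e.\ that the bookkeeping of Proposition~\ref{prop:Z-lower-bound} applies. That proposition already covers this.
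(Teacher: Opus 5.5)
Your proof is correct and follows the paper's route: the certified process reduces the claim to \(N_n\ge 2\,\mathbf 1^\top A_m^{n-1}e_\star\), and Perron--Frobenius finishes it. Your positive left eigenvector gives a bound valid for every \(n\), which avoids the paper's asymptotic argument with \(c_m\) decreased to cover small \(n\).

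One small slip occurs in the optional explicit formula. The left-eigenvector equations are \(\rho_m\ell_j=\ell_1+\ell_{j+1}\) for \(j<m\) and \(\rho_m\ell_m=\ell_1\), so \(\ell_j=\ell_1\sum_{t=1}^{m-j+1}\rho_m^{-t}\). Your version is off by a factor \(\rho_m\) and reads \(\ell_1=\rho_m\ell_1\) at \(j=1\). The corrected formula also gives \(\max_j\ell_j=\ell_1\), hence the clean bound \(N_n\ge 2\rho_m^{n-1}\).
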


\begin{proof}
On the event that all first \(n\) coarse symbols are \(m\), the favorable chain evolves deterministically by \(A_m\). Starting from the initial interval, the first layer creates two favorable tokens in the top state, and afterwards the chain count is bounded below by \(\mathbf 1^\top A_m^{n-1}(2e_\star)\). By Perron--Frobenius theory for the primitive matrix \(A_m\), this quantity is at least \(c_m\rho_m^n\) after decreasing \(c_m\), if necessary, to handle finitely many small \(n\). Since these tokens correspond to actual branches, the same lower bound holds for \(N_n\).
\end{proof}

We now prove the all-favorable mechanism stated at the beginning of the paper.

\begin{proof}[Proof of Theorem~\ref{thm:main-all-favorable}]
Let \(E_n^{(m)}\) be the event that the first \(n\) layers are all maximally favorable. Then \(\mathbb P(E_n^{(m)})=p_m^n\). On this event, Proposition~\ref{prop:all-favorable-growth} gives \(N_n\ge c_m\rho_m^n\), and hence \(n^{-1}\log N_n\ge\log\rho_m+n^{-1}\log c_m\). For all sufficiently large \(n\), the right-hand side is at least \(\log\rho_m-\delta\), so
\[
\liminf_{n\to\infty} n^{-1}
\log\mathbb P\bigl(n^{-1}\log N_n\ge\log\rho_m-\delta\bigr)
\ge \log p_m.
\]
Moreover,
\(
\mathbb E[N_n]
\ge \mathbb E[N_n\mathbf 1_{E_n^{(m)}}]
\ge c_m\rho_m^n p_m^n
=c_m(p_m\rho_m)^n.
\)
The final claim is immediate.
\end{proof}

The next proposition quantifies how close the deterministic certified growth rate can be to the ideal Telgarsky rate \(2\). The parameter \(m\) is fixed throughout the proposition.

\begin{proposition}[Perron-root asymptotics]\label{prop:perron-root-asymptotics}
The Perron root \(\rho_m\) of \(A_m\) satisfies \(\rho_1=1\), \(\rho_m\in(1,2)\) for every \(m\ge2\), \(2-\rho_m=\rho_m^{-m}\), \(\rho_m\uparrow2\), \(2-\rho_m\sim2^{-m}\), and \(\log 2-\log\rho_m\sim2^{-m-1}\).
\end{proposition}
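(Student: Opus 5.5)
The plan is to compute the characteristic polynomial of the companion-type matrix \(A_m\) explicitly and then read off every claim from that polynomial. In the ordered basis \((s_{m-1},\dots,s_0)\), \(A_m\) has first row all ones and ones on the subdiagonal. This is exactly the companion matrix of
\[
p_m(\lambda)=\lambda^m-\lambda^{m-1}-\cdots-\lambda-1 .
\]
To confirm this, I will solve \(A_mx=\lambda x\) directly. The subdiagonal equations force \(x_{k+1}=x_k/\lambda\), so \(x=(\lambda^{m-1},\dots,1)\) up to scaling. The first row then gives \(\lambda^m=\sum_{k<m}\lambda^k\). For \(m=1\) the matrix is \((1)\), so \(\rho_1=1\).

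Next I would multiply \(p_m\) by \(\lambda-1\) to obtain
\[
q_m(\lambda)=\lambda^{m+1}-2\lambda^m+1 .
\]
For \(\lambda\neq1\), the equation \(p_m(\lambda)=0\) is then equivalent to \(\lambda^m(2-\lambda)=1\), that is, \(2-\lambda=\lambda^{-m}\). This is the claimed identity once I know that \(\rho_m\) is a positive root different from \(1\).

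For the location of the root, I use the following facts.
\begin{itemize}
\item \(A_m\) is nonnegative and irreducible, because the first row reaches every state and the subdiagonal returns to the top. It is even primitive for \(m\ge2\), since the \((1,1)\) entry is \(1\). Perron--Frobenius therefore makes \(\rho_m\) a positive real eigenvalue.
\item By Descartes' rule, \(p_m\) has exactly one positive root.
\item For \(m\ge2\), \(p_m(1)=1-m<0\) and \(p_m(2)=2^m-(2^m-1)=1>0\). So the unique positive root, which is \(\rho_m\), lies in \((1,2)\).
\end{itemize}

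For monotonicity, write \(p_{m+1}(\lambda)=\lambda p_m(\lambda)-1\). Then \(p_{m+1}(\rho_m)=-1<0\), so \(\rho_{m+1}>\rho_m\). Since \(\rho_m\ge\rho_2>1.6\) for \(m\ge2\), we get \(2-\rho_m=\rho_m^{-m}\le 1.6^{-m}\to0\). Hence \(\rho_m\uparrow2\).

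The main obstacle is the sharp asymptotics, because we need \(\rho_m^{-m}\sim2^{-m}\), i.e.\ \(m\log(2/\rho_m)\to0\). I would bootstrap as follows. First, \(2-\rho_m\le 1.6^{-m}\) gives
\[
\log(2/\rho_m)\le \frac{2-\rho_m}{\rho_m}=O(1.6^{-m}).
\]
Therefore \(m\log(2/\rho_m)=O(m\,1.6^{-m})\to0\), so \((2/\rho_m)^m\to1\), which means \(\rho_m^{-m}2^m\to1\). This yields \(2-\rho_m\sim2^{-m}\). Finally,
\[
\log2-\log\rho_m=-\log\bigl(1-(2-\rho_m)/2\bigr)\sim(2-\rho_m)/2\sim2^{-m-1}.
\]
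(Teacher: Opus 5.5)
Your proposal is correct and follows essentially the paper's route: the characteristic polynomial \(\lambda^m-\lambda^{m-1}-\cdots-\lambda-1\), multiplication by \(\lambda-1\) to obtain \(2-\rho_m=\rho_m^{-m}\), and the bootstrap \(m(2-\rho_m)\to0\) (equivalently \(m\log(2/\rho_m)\to0\)) for the sharp asymptotics. The differences are cosmetic. You locate and order the roots via Descartes' rule and \(p_{m+1}(\lambda)=\lambda p_m(\lambda)-1\), where the paper uses monotonicity of \(\lambda\mapsto\sum_{j=1}^m\lambda^{-j}\). Note also that for \(m=1\) the identity \(2-\rho_1=\rho_1^{-1}\) has to be checked directly, since \(\rho_1=1\) is excluded by your division argument.
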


\begin{proof}
The matrix \(A_m\) is primitive: from any favorable-chain state one can reach the top state in one step, and from the top state one can reach any target state in at most \(m-1\) further steps. Hence, \(A_m^m\) is strictly positive. Its characteristic polynomial is \(\lambda^m-\lambda^{m-1}-\cdots-\lambda-1\), so its Perron root is the unique positive solution of
\(
\rho_m^m=\rho_m^{m-1}+\cdots+\rho_m+1	\).
Equivalently
	\(1=\sum_{j=1}^m\rho_m^{-j}.
\)
The case \(m=1\) gives \(\rho_1=1\). For \(m\ge2\), the function \(\lambda\mapsto\sum_{j=1}^m\lambda^{-j}\) is strictly decreasing on \((0,\infty)\), takes the value \(m>1\) at \(\lambda=1\), and takes a value strictly smaller than \(1\) at \(\lambda=2\). Thus, \(\rho_m\in(1,2)\). The same strict-monotonicity argument shows that \(\rho_{m+1}>\rho_m\), because \(\sum_{j=1}^{m+1}\rho_m^{-j}=1+\rho_m^{-(m+1)}>1\).

For \(m=1\), the identity \(2-\rho_m=\rho_m^{-m}\) is immediate. For \(m\ge2\), multiplying the characteristic equation by \(\rho_m-1\) gives \(\rho_m^{m+1}-2\rho_m^m+1=0\), which is equivalent to \(2-\rho_m=\rho_m^{-m}\). Since \(\rho_m\) is increasing and bounded above by \(2\), it has a limit. Moreover, \(\rho_m\ge\rho_2>1\) for \(m\ge2\), so \(\rho_m^{-m}\to0\). The identity \(2-\rho_m=\rho_m^{-m}\) therefore forces \(\rho_m\uparrow2\).
Write \(\delta_m:=2-\rho_m\). Since \(\delta_m=\rho_m^{-m}\le\rho_2^{-m}\), one has \(m\delta_m\to0\). Furthermore,
\(
\delta_m=\rho_m^{-m}
=2^{-m}(1-\delta_m/2)^{-m}.
\)
Since \(m\delta_m\to0\), it follows that \((1-\delta_m/2)^{-m}\to1\), and hence \(\delta_m\sim2^{-m}\). Finally, \(\log 2-\log\rho_m=-\log(1-\delta_m/2)\sim\delta_m/2\sim2^{-m-1}\).
\end{proof}

We now prove the favorable-chain theorem stated in Section~\ref{subsec:ldp-lower} and  the local lower-tail exclusion theorem. The latter is obtained by choosing the defect resolution first and the noise level second. This order is important. For a given tolerance \(\xi>0\), we first choose \(m\) so that the certified Perron rate \(\rho_m\) is within \(\xi\) of \(2\). Only after \(m\) is fixed do we choose \(\varepsilon\) small enough that every admissible layer is \(m\)-favorable.

\begin{proof}[Proof of Theorem~\ref{thm:main-perron}]
The assertions about the Perron root \(\rho_m\) are exactly Proposition~\ref{prop:perron-root-asymptotics}. The deterministic lower bound for \(N_n\) on the all-favorable event is Proposition~\ref{prop:all-favorable-growth}. Combining these two propositions proves the theorem.
\end{proof}

\begin{proof}[Proof of Theorem~\ref{thm:main-local}]
First choose \(m=m(\xi)\) so large that \(\log\rho_m>\log 2-\xi/2\), which is possible by Proposition~\ref{prop:perron-root-asymptotics}. 
Say that the environment is \(m\)-favorable if
\(
	\eta_*(\varepsilon):=\sup_{\omega\in U_{\varepsilon}} \eta(\omega)\le r_m .
\)
This condition says that the random environment is so close to the Telgarsky map
that every layer is favorable at level \(m\). 	With this \(m\) fixed, choose \(\varepsilon_\ast(\xi)>0\) so small that \(\eta_\ast(\varepsilon)\le r_m\) whenever \(0<\varepsilon\le\varepsilon_\ast(\xi)\). This is possible because \(\eta_\ast(\varepsilon)\to0\) as \(\varepsilon\downarrow0\), while \(r_m>0\) is now fixed.
Under this condition every layer is \(m\)-favorable, so Proposition~\ref{prop:all-favorable-growth} gives \(N_n\ge c_m\rho_m^n\) almost surely. Finally, choose \(n_0=n_0(\xi)\) so large that \(n^{-1}\log c_m\ge-\xi/2\) for all \(n\ge n_0\). Then, for all such \(n\) we have
\(
n^{-1}\log N_n
\ge \log\rho_m+n^{-1}\log c_m
> \log 2-\xi
\)
almost surely. Hence,
\(
\mathbb P\bigl(n^{-1}\log N_n\le \log 2-\xi\bigr)=0,
\)
for all \(n\ge n_0\).
\end{proof}

\section*{Declarations}
\noindent {\bf Acknowledgements.} The authors used OpenAI's ChatGPT during the preparation of this manuscript. All mathematical statements, proofs, computations, references, and conclusions were independently checked and verified by the authors, who take full responsibility for the content of the manuscript.

\medskip

\noindent {\bf Funding.} This research was supported by the Scientific and Technological Research Council of Türkiye (TÜBİTAK) under the TÜBİTAK-BİDEB 2219 International Postdoctoral Research Fellowship Programme, Grant No. 1059B192501091.
\medskip

\noindent {\bf Conflict of Interest}
On behalf of all authors, the corresponding author states that there is no conflict of interest. 
\medskip

\noindent {\bf Data Availability}
The data will be available on reasonable request.

% \medskip
% \noindent {\bf Conflict of interest.} Insert the appropriate Springer declaration before submission.

% \medskip
% \noindent {\bf Data availability.} Insert the appropriate Springer declaration before submission.

\bibliography{ll}

\end{document}